\newtheorem{thm}{Theorem}[section]
\newtheorem{prob}[thm]{Problem}
\newtheorem{claim}{Claim}
\newtheorem{obs}{Observation}
\newtheorem{cons}{Construction}
\begin{document}

\title{Exact minimum codegree thresholds  for $K_4^-$-covering and $K_5^-$-covering\thanks{The work was supported by NNSF of China (No. 11671376),  NSF of Anhui Province (No. 1708085MA18), and Anhui Initiative in Quantum Information Technologies (AHY150200).}}
\author{Lei Yu$^a$, \quad Xinmin Hou$^b$,\quad Yue Ma$^c$, \quad Boyuan Liu$^d$\\
\small $^{a,b,c,d}$ Key Laboratory of Wu Wen-Tsun Mathematics\\
%\small Chinese Academy of Sciences\\
\small School of Mathematical Sciences\\
\small University of Science and Technology of China\\
\small Hefei, Anhui 230026, China.
%\small $^a$xmhou@ustc.edu.cn,\quad  $^b$yuqiu@mail.ustc.edu.cn\\
%\small $^c$lby1055@mail.ustc.edu.cn
}

\date{}
\maketitle

\begin{abstract}
Given two $3$-graphs $F$ and $H$, an $F$-covering of $H$ is a collection of copies of $F$ in $H$ such that each vertex of $H$ is contained in at least one copy of them. Let {$c_2(n,F)$} be the maximum integer $t$ such that every 3-graph with minimum codegree greater than $t$ has an $F$-covering.  In this note, we answer an open problem of Falgas-Ravry and Zhao (SIAM J. Discrete Math., 2016) by determining the exact value of {$c_2(n, K_4^-)$} and {$c_2(n, K_5^-)$}, where $K_t^-$ is the complete $3$-graph on $t$ vertices with one edge removed.
%Through these results, an open problem proposed by Falgas-Ravry and Zhao (SIAM J. Discrete Math., 2016) is  solved completely.
\end{abstract}

\section{Introduction}

%\noindent{\bf 1.1 Notation}

Given a set $V$ and a positive integer $k$, let $\binom{V}{k}$ be the collection of $k$-element subset of $V$.
A {\em simple $k$-uniform hypergraph} (or  $k$-graph for short) $H=(V,E)$ consists of a vertex set $V$ and an edge set $E\subseteq\binom{V}{k}$.
We write graph for $2$-graph for short. For a set $S\subseteq V(H)$, the {\em neighbourhood} $N_H(S)$ of $S$ is $\{T\subseteq V(H)\backslash S:T\cup S\in E(H)\}$ and the {\em degree} of $S$ is $d_H(S)=|N_H(S)|$.
%We denote by $\delta_s(H)$
The {\em minimum $s$-degree} of $H$, denoted by $\delta_s(H)$, is the minimum $d_H(S)$ taken over all $s$-element sets of $V(H)$, and $\delta_{k-1}(H)$ and $\delta_1(H)$ are usually called the {\em minimum codegree} and the {\em minimum degree} of $H$, respectively.
An $r$-graph $H$ is called an {\em $r$-partite $r$-graph} if the vertex set of $H$ can be partitioned into $r$ parts such that each edge of $H$ intersects each part exactly one vertex. Given disjoint sets $V_1, V_2, \cdots, V_r$, let $K(V_1, V_2, \ldots, V_r)$ be the complete $r$-partite $r$-graph with vertex classes $V_1, V_2,\ldots, V_r$.

Given a $k$-graph $F$,  we say a $k$-graph $H$ has an {\em $F$-covering} if each vertex of $H$ is contained in some copy of $F$. For $0\leq i<k$, define
$$c_i(n,F)=\max\{\delta_i(H) : H \text{ is a $k$-graph on $n$ vertices with no } F\text{-covering}\}.$$
We call $c_{k-1}(n,F)$ the {\em minimum codegree threshold} for $F$-covering.

%Trivially, For $0\leq i<k$,, we have
%$$ex_i(n,F)\leq c_i(n,F)\leq t_i(n,F),$$
%so the covering problem is an natural intermediate but distinct problem from the well-studied Tur\'{a}n problem and tiling problem. As we known, the last two problems are very difficult, so it may be hoped that giving good bounds for the first one may also help bounding the last two. On the other hand, the covering problem is of independent interest.

For graphs $F$, the $F$-covering problem was solved asymptotically in~\cite{Z-PhD16} by showing that $c_1(n,F)=(\frac{\chi(F)-2}{\chi(F)-1}+o(1))n$, where $\chi(F)$ is the chromatic number of $F$. For general $k$-graphs, the function {$c_i(n, F)$} was determined for some special family of $k$-graphs $F$. For example, Han, Lo, and Sanhueza-Matamala~\cite{HLS-ENDM17} proved that $c_{k-1}(n,C_s^{(k,k-1)})\leq (\frac{1}{2}+o(1))n$ for {$k\geq3, s\geq 2k^2$}, where $C_s^{(k,\ell)}$ $(1\leq \ell<k)$ is the $k$-graph on $s$ vertices such that its vertices can be ordered cyclicly so that every edge consists of $k$ consecutive vertices under this order and two consecutive edges intersect in exactly $\ell$ vertices. Han, Zang, and Zhao showed in~\cite{HZZ-JCTA17} that $c_1(n,K)=(6-4\sqrt{2}+o(1))\binom{n}{2}$, where $K$ is a complete $3$-partite $3$-graph with at least two vertices in each part.
In this note, we focus on the minimum codegree threshold for covering $3$-graphs. Let $K_t$ denote the complete $3$-graph on $t$ vertices and let $K_t^-$ denote the $3$-graph obtained from $K_t$ by removing one edge.
% In the $3$-graph and minimum codegree case,
Falgas-Ravry and Zhao~\cite{FZ-SIAM16} determined the exact value of $c_2(n,K_4)$ for $n>98$ and gave lower and upper bounds of $c_2(n, K_4^-)$ and $c_2(n, K_5^-)$. More specifically, they proved the following theorem.

\begin{thm}[Theorem 1.2 in~\cite{FZ-SIAM16}]\label{ASY K_4^-}
Suppose $n=6m+r$ for some $r\in \{0,1,2,3,4,5\}$ and $m\in \mathbb{N}$ with $n\geq 7$. Then
\begin{equation*}
c_2(n,K_4^-)=
\begin{cases}
2m-1\text{ or }2m&\text{ if } r=0,\\
2m&\text{ if } r\in\{1,2\},\\
2m\text{ or }2m+1&\text{ if } r\in\{3,4\},\\
2m+1&\text{ if } r=5.\\
\end{cases}
\end{equation*}
\end{thm}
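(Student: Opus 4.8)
The plan is to prove the exact identity $c_2(n,K_4^-)=\lfloor n/3\rfloor$ for every $n\ge 7$; since $\lfloor n/3\rfloor$ equals $2m$ when $r\in\{0,1,2\}$ and $2m+1$ when $r\in\{3,4,5\}$, this contains Theorem~\ref{ASY K_4^-} and in fact removes the ambiguity present there for $r\in\{0,3,4\}$. The argument rests on a structural observation. For a vertex $v$ of a $3$-graph $H$, let $L_v$ be its \emph{link}, the graph on $V(H)\setminus\{v\}$ with edge set $\{\{x,y\}:\{v,x,y\}\in E(H)\}$, so that $\deg_{L_v}(u)=d_H(\{v,u\})$ for all $u$. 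Then $v$ lies in no copy of $K_4^-$ if and only if (i) $L_v$ is triangle-free, and (ii) every edge $\{a,c,d\}\in E(H)$ with $v\notin\{a,c,d\}$ contains at most one pair that is an edge of $L_v$. Indeed, in a copy of $K_4^-$ each vertex is either the apex (all three edges through it) or one of the three base vertices (exactly two edges through it); $v$ as apex means its three neighbours span a triangle of $L_v$, while $v$ as a base vertex with apex $a$ and remaining vertices $c,d$ means $\{v,a,c\},\{v,a,d\},\{a,c,d\}\in E(H)$, so the $H$-edge $\{a,c,d\}$ contains the two $L_v$-edges $ac,ad$; conversely either configuration displays a $K_4^-$ through $v$.

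\emph{Upper bound.} Let $H$ be an $n$-vertex $3$-graph with no $K_4^-$-covering, put $d=\delta_2(H)$, and assume $d\ge 1$ (otherwise there is nothing to prove). Choose an uncovered vertex $v$. By (i), $L_v$ is triangle-free, and $\deg_{L_v}(u)\ge d$ for all $u$, so $L_v$ has an edge $\{x,y\}$; triangle-freeness forces $N_{L_v}(x)\cap N_{L_v}(y)=\emptyset$, hence $|N_{L_v}(x)\cup N_{L_v}(y)|\ge 2d$. By (ii), any $w\ne v$ with $\{x,y,w\}\in E(H)$ lies outside $N_{L_v}(x)\cup N_{L_v}(y)$, while $\{v,x,y\}\in E(H)$; therefore $d\le d_H(\{x,y\})\le 1+\bigl(n-1-2d\bigr)=n-2d$, so $3d\le n$, i.e.\ $d\le\lfloor n/3\rfloor$. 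Hence $c_2(n,K_4^-)\le\lfloor n/3\rfloor$, which is the upper half of every case of Theorem~\ref{ASY K_4^-}.

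\emph{Lower bound.} For a triangle-free graph $G$ on a set $V'$ with $|V'|=n-1$ and a new vertex $v$, let $H_G$ be the $3$-graph on $V'\cup\{v\}$ whose edges are all triples $\{v\}\cup e$ with $e\in E(G)$ together with all triples of $V'$ containing at most one edge of $G$. By the characterization, $v$ lies in no copy of $K_4^-$, so $H_G$ has no $K_4^-$-covering, and a short count (using triangle-freeness of $G$ for the second identity) gives $d_{H_G}(\{v,u\})=\deg_G(u)$, $d_{H_G}(\{x,y\})=n-\deg_G(x)-\deg_G(y)$ for $xy\in E(G)$, and $d_{H_G}(\{x,y\})=n-3-|N_G(x)\cap N_G(y)|$ for $xy\notin E(G)$. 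Thus $\delta_2(H_G)\ge\lfloor n/3\rfloor$ whenever $G$ is triangle-free with $\delta(G)\ge\lfloor n/3\rfloor$, every edge of $G$ has endpoint-degree-sum at most $n-\lfloor n/3\rfloor$, and every non-adjacent pair has at most $n-3-\lfloor n/3\rfloor$ common neighbours (the last holds automatically once $\Delta(G)\le n-3-\lfloor n/3\rfloor$, which is true for all the near-$(n/3)$-regular $G$ below when $n\ge 7$). Such a $G$ on $n-1$ vertices always exists: for $n\equiv1,3,5\pmod 6$ take a $\lfloor n/3\rfloor$-regular bipartite graph on two parts of sizes $3m$, $3m+1$, $3m+2$ respectively; for $n\equiv0,2\pmod 6$, where $n-1$ is odd, take the circulant on $\mathbb Z_{n-1}$ with connection set the symmetric interval $[\lfloor (n-1)/3\rfloor+1,\ (n-1)-\lfloor (n-1)/3\rfloor-1]$, which is $2m$-regular and triangle-free by a direct check that no two of its elements sum into it modulo $n-1$; and for $n\equiv4\pmod 6$, where $n-1=6m+3$ is odd and no $(2m+1)$-regular graph exists, take a bipartite graph with parts $A,B$ of sizes $3m+2$ and $3m+1$ in which every vertex of $A$ has degree $2m+1$ while $B$ has $2m+1$ vertices of degree $2m+2$ and $m$ vertices of degree $2m+1$ — realizable by the Gale--Ryser criterion, with $\delta(G)=2m+1$, with the degree-$(2m+2)$ vertices confined to the independent set $B$, and hence with every edge of endpoint-degree-sum at most $4m+3=n-\lfloor n/3\rfloor$. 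In each case the three formulas give $\delta_2(H_G)=\lfloor n/3\rfloor$.

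Combining the two bounds yields $c_2(n,K_4^-)=\lfloor n/3\rfloor$, and hence Theorem~\ref{ASY K_4^-}. I expect the main obstacle to lie in the lower-bound construction for the residues where $G$ cannot be chosen regular — chiefly $n\equiv4\pmod 6$, and also the odd-order triangle-free circulants for $n\equiv0,2\pmod 6$ — where one must keep $G$ triangle-free, keep its minimum degree at the target, keep the few larger-degree vertices pairwise non-adjacent, and then verify that all three codegree formulas really reach $\lfloor n/3\rfloor$; this last check is precisely where the hypothesis $n\ge 7$ (and, when $r=0$, the implied bound $m\ge 2$) is used. The upper bound, by contrast, is just the single double-counting step above.
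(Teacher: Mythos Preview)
Your proposal is correct and in fact establishes the sharper identity $c_2(n,K_4^-)=\lfloor n/3\rfloor$ (the paper's Theorem~\ref{exact K_4^-}), of which Theorem~\ref{ASY K_4^-} is an immediate consequence. Note that the paper does not itself prove Theorem~\ref{ASY K_4^-}: it is quoted from Falgas-Ravry and Zhao~\cite{FZ-SIAM16}, and the paper's own contribution is only the lower-bound constructions for the three residues $r\in\{0,3,4\}$ left open there.

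Your argument and the paper's share the same skeleton: the link-graph characterisation of an uncovered vertex (the paper's Observation~\ref{OBS: o1}) and the $H_G$ construction from a triangle-free $G$ with the rule that a triple avoiding $x$ is an edge iff it spans no $P_2$ in $G$, together with the resulting codegree formulas. The differences are twofold. First, you supply a self-contained upper bound (pick an $L_v$-edge $xy$ and observe that the pair $\{x,y\}$ has at most $1+(n-1-2d)$ neighbours in $H$), whereas the paper simply imports the upper bound from~\cite{FZ-SIAM16}. Second, for the lower bound you exhibit \emph{generic} near-$\lfloor n/3\rfloor$-regular triangle-free graphs on $n-1$ vertices --- balanced regular bipartite graphs when $n\equiv 1,3,5\pmod 6$, the ``middle-third'' circulant on $\mathbb Z_{n-1}$ when $n\equiv 0,2\pmod 6$, and a Gale--Ryser bipartite graph with the few surplus-degree vertices confined to one side when $n\equiv 4\pmod 6$ --- while the paper builds, for each of $r=0,3,4$ separately, an explicit blown-up $C_6$ decorated with a short cycle and a few matching edges (its Constructions A, B, C). Your route is more uniform and yields a self-contained proof of both bounds at once; the paper's route trades the upper bound for a citation but makes every link graph fully explicit and verifiable by hand.
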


\begin{thm}[Theorem 1.4 in~\cite{FZ-SIAM16}]\label{ASY K_5^-}
$\lfloor\frac{2n-5}{3}\rfloor\leq c_2(n,K_5^-)\leq \lfloor\frac{2n-2}{3}\rfloor$.
\end{thm}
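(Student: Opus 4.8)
I would split the argument into the lower bound (an explicit construction) and the upper bound (a short link argument), keeping careful track of the floor arithmetic since both estimates are tight only up to one unit. \emph{Lower bound.} The plan is to build an $n$-vertex $3$-graph $H$ with one vertex $v$ lying in no copy of $K_5^-$ while $\delta_2(H)$ stays as large as $\lfloor(2n-5)/3\rfloor$. Partition $V\setminus\{v\}$ into three parts $P_1,P_2,P_3$ of sizes differing by at most one, and let $E(H)$ consist of all triples $\{v,x,y\}$ with $x,y$ in distinct parts together with all triples contained in $V\setminus\{v\}$ that are \emph{not} rainbow (i.e.\ have two vertices in a common part). To see that $v$ lies in no $K_5^-$ it suffices to show every $5$-set $\{v,a,b,c,d\}$ has at least two non-edges, since a $5$-set with at least nine edges contains $K_5^-$: pigeonhole puts two of $a,b,c,d$ in a common part, say $a,b\in P_i$, giving the non-edge $\{v,a,b\}$, and a short case analysis on where $c,d$ fall produces a second non-edge --- one of $\{v,a,c\}$, $\{v,c,d\}$, or the rainbow triple $\{a,c,d\}$. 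Then I would compute the codegrees: $d_H(\{v,x\})=(n-1)-|P_i|$ for $x\in P_i$; $d_H(\{x,y\})=n-3$ when $x,y$ share a part; and $d_H(\{x,y\})=|P_i|+|P_j|-1$ when $x\in P_i,y\in P_j$ with $i\ne j$. Since $(n-1)-|P_i|$ equals $|P_j|+|P_k|$ and $n-3$ is never the minimum for $n\ge 4$, the minimum codegree is $\min_{i<j}(|P_i|+|P_j|)-1=(n-1)-\max_i|P_i|-1$, which for the balanced partition equals $\lfloor 2(n-1)/3\rfloor-1=\lfloor(2n-5)/3\rfloor$; hence $c_2(n,K_5^-)\ge\lfloor(2n-5)/3\rfloor$.

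\emph{Upper bound, step 1.} Suppose for contradiction that $H$ is an $n$-vertex $3$-graph with $\delta_2(H)\ge\delta_0:=\lfloor(2n-2)/3\rfloor+1$ and that some vertex $v$ lies in no copy of $K_5^-$, so every $5$-set through $v$ has at least two non-edges. Let $L_v$ be the link graph of $v$ (the graph on $V\setminus\{v\}$ whose edges are the pairs $\{x,y\}$ with $\{v,x,y\}\in E(H)$). I would first show $H$ has no ``$3$-graph $K_4$'' through $v$, i.e.\ no $4$-set $\{v,x,y,z\}$ spanning all four of its triples --- equivalently, no triangle of $L_v$ is an edge of $H$. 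If such a $4$-set existed, then for each of the $n-4$ vertices $w\notin\{v,x,y,z\}$ at least two of the six triples $vxw,vyw,vzw,xyw,xzw,yzw$ would be non-edges; summing over $w$, and noting that each of the six codegrees $d_H(\{v,x\}),\dots,d_H(\{y,z\})$ is at least $\delta_0$ and so forces at most $(n-2)-\delta_0$ such bad $w$'s, gives $2(n-4)\le 6\bigl((n-2)-\delta_0\bigr)$, i.e.\ $3\delta_0\le 2n-2$; but $3\delta_0\ge 2n-1$ holds in every residue class of $n$ modulo $3$, a contradiction.

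\emph{Upper bound, step 2.} Now fix any $x$ and put $A:=N_{L_v}(x)$, so $|A|=d_H(\{v,x\})\ge\delta_0$. On the vertex set $A$ consider $G:=L_v[A]$ and the graph $G'$ with $yz\in E(G')$ exactly when $\{x,y,z\}\in E(H)$. If $yz\in E(G)$ then $\{x,y,z\}$ is a triangle of $L_v$ (because $y,z\in N_{L_v}(x)$), so $\{x,y,z\}\notin E(H)$ by step 1; hence $G$ and $G'$ are edge-disjoint. Inside $A$ each vertex $y$ has both $G$-degree and $G'$-degree at least $|A|+\delta_0-n+1$, since the pair $\{v,y\}$, respectively $\{x,y\}$, has at most $(n-2)-\delta_0$ non-neighbours among the $|A|-1$ vertices of $A\setminus\{y\}$. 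Edge-disjointness then forces $2(|A|+\delta_0-n+1)\le|A|-1$, i.e.\ $|A|\le 2n-2\delta_0-3$; together with $|A|\ge\delta_0$ this gives $3\delta_0\le 2n-3$, again contradicting $3\delta_0\ge 2n-1$. Therefore every vertex of $H$ lies in a $K_5^-$, so $c_2(n,K_5^-)\le\lfloor(2n-2)/3\rfloor$.

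The step I expect to be the crux is step 1 of the upper bound together with the edge-disjoint link argument of step 2: both inequalities bite only because $\delta_0$ exceeds $\tfrac{2n-2}{3}$ by a hair, so one must verify $3\bigl(\lfloor(2n-2)/3\rfloor+1\bigr)\ge 2n-1$ in all three residue classes and keep every induced minimum-degree estimate nonnegative, which forces $n$ not too small (say $n\ge 7$, matching Theorem~\ref{ASY K_4^-}). By contrast the lower-bound construction is routine once one sees that the right obstruction is a vertex whose link is complete tripartite, which pushes the extremal codegree up to about $\tfrac23 n$ rather than $\tfrac12 n$.
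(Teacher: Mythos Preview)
Your argument is correct in both directions, but there is nothing in the present paper to compare it to: Theorem~\ref{ASY K_5^-} is quoted from Falgas-Ravry and Zhao~\cite{FZ-SIAM16} without proof, and the paper only \emph{uses} its upper bound as a black box when establishing Theorem~\ref{exact K_5^-}. So you have supplied a self-contained proof where the paper gives none.

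It is still worth noting how your lower-bound construction relates to what the paper actually does. Your $3$-graph is exactly the paper's Construction~\ref{Cons E} with the ingredient $T$ deleted: you take all triples through $v$ that cross the tripartition and all non-rainbow triples inside $V\setminus\{v\}$, and nothing else. The paper's refinement is to add back a carefully chosen sparse family $T$ of rainbow triples (coming from a $1$-factorisation of $K(V_1,V_2)$) so that every cross pair $\{a,b\}$ picks up one extra neighbour, lifting $\delta_2$ from $\lfloor(2n-5)/3\rfloor$ to $\lfloor(2n-2)/3\rfloor$ while still keeping $v$ out of any $K_5^-$. In that sense your construction is the natural ``first approximation'' to the extremal example, which explains the $3$-unit gap in the numerator.

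On the upper-bound side, your two-step counting argument (first exclude a $K_4$ through $v$ via the $2(n-4)\le 6((n-2)-\delta_0)$ double count, then use edge-disjointness of the two link graphs $G,G'$ inside a common neighbourhood $A$) is clean and correct; the inequalities $3\delta_0\ge 2n-1$ versus $3\delta_0\le 2n-2$ (step~1) and $3\delta_0\le 2n-3$ (step~2) do bite in every residue class of $n$ as you claim, and the auxiliary nonnegativity $|A|+\delta_0-n+1\ge 2\delta_0-n+1>0$ holds for all $n\ge 1$, so the only genuine restriction is that $\delta_0\le n-2$, i.e.\ $n\ge 5$. Since the paper does not reproduce the Falgas-Ravry--Zhao argument, I cannot say whether your route coincides with theirs, but it is a standard and efficient link-graph approach.
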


Falgas-Ravry and Zhao~\cite{FZ-SIAM16} also conjectured that the gap between the upper and lower bounds for $c_2(n,K_4^-)$ could
be closed and left this as an open problem.

\begin{prob}[\cite{FZ-SIAM16}]\label{PROB: p1}
Determine the exact {value of} $c_2(n,K_4^-)$ in the case $n\equiv 0,3,4 \pmod 6$.
\end{prob}

In this note, we  determine not only the exact value of $c_2(n,K_4^-)$  but also the exact value of $c_2(n, K_5^-)$.
%Moreover, the corresponding extremal graphs for $K_4^-$ and $K_5^-$ are also constructed. The following are our main results.
%some constructions which show that the upper bounds obtained above is the correct covering codegree thresholds.

\begin{thm}\label{exact K_4^-}
$c_2(n,K_4^-)=\lfloor\frac{n}{3}\rfloor$.
\end{thm}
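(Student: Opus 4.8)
The plan is to prove the two inequalities $c_2(n,K_4^-)\le\lfloor n/3\rfloor$ and $c_2(n,K_4^-)\ge\lfloor n/3\rfloor$. For both directions the key is the elementary observation that a vertex $v$ of a $3$-graph $H$ lies in a copy of $K_4^-$ if and only if either its link graph $L_v$ (the graph on $V(H)\setminus\{v\}$ whose edges are the pairs $\{x,y\}$ with $\{v,x,y\}\in E(H)$) contains a triangle, or some edge $e\in E(H)$ with $v\notin e$ has the property that the induced subgraph $L_v[e]$ has at least two edges (these are the cases where $v$ is, respectively, the apex or a non-apex vertex of the copy). For the upper bound I would argue as follows. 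Suppose $\delta_2(H)\ge\lfloor n/3\rfloor+1$ and that some $v$ lies in no copy of $K_4^-$. Then $L_v$ is triangle-free, and since $\delta(L_v)\ge\delta_2(H)\ge2$ it has an edge $ab$. Put $S=\{z\ne v:\{a,b,z\}\in E(H)\}$. Triangle-freeness of $L_v$ forces $N_{L_v}(a)\cap N_{L_v}(b)=\emptyset$, and applying the observation to the edges $\{a,b,z\}$ with $z\in S$ (each avoids $v$ and already carries the $L_v$-edge $ab$) forces $S\cap(N_{L_v}(a)\cup N_{L_v}(b))=\emptyset$. These three sets lie in the $(n-1)$-set $V(H)\setminus\{v\}$, so $|S|+|N_{L_v}(a)|+|N_{L_v}(b)|\le n-1$; but $|N_{L_v}(a)|,|N_{L_v}(b)|\ge\delta_2(H)$ and (since $v\in N_H(\{a,b\})$) $|S|\ge\delta_2(H)-1$, whence the left side is at least $3\lfloor n/3\rfloor+2\ge n$, a contradiction. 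Hence every vertex of $H$ is covered.

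For the lower bound I would fix a triangle-free graph $G$ on a set $W$ of $n-1$ vertices with $\delta(G)=\lfloor n/3\rfloor$ and $d_G(x)+d_G(y)\le n-\lfloor n/3\rfloor$ for every edge $xy\in E(G)$; for every residue class of $n$ except one such a $G$ may be taken $\lfloor n/3\rfloor$-regular (a balanced bipartite graph, or a circulant on $n-1$ vertices whose connection set is a symmetric sum-free interval), and in the remaining class — where a regular graph of that order does not exist for parity reasons — a suitable near-regular bipartite $G$ is furnished by the Gale--Ryser theorem. Define $H$ on $\{v\}\cup W$ by taking $E(H)$ to consist of all triples $\{v\}\cup e$ with $e\in E(G)$, together with all $T\in\binom{W}{3}$ for which $G[T]$ has at most one edge. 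Then $L_v=G$ is triangle-free and no edge of $H$ missing $v$ spans more than one edge of $L_v$, so by the observation $v$ is in no copy of $K_4^-$, i.e. $H$ has no $K_4^-$-covering. Finally one checks $\delta_2(H)=\lfloor n/3\rfloor$: a pair $\{v,x\}$ has codegree $d_G(x)$; a pair $\{x,y\}$ with $xy\in E(G)$ has codegree $n-d_G(x)-d_G(y)$ (here triangle-freeness of $G$ is used); and a pair $\{x,y\}$ with $xy\notin E(G)$ has codegree $n-3-|N_G(x)\cap N_G(y)|$. The imposed conditions on $G$ make all of these at least $\lfloor n/3\rfloor$, with equality attained on the pairs $\{v,x\}$ for which $d_G(x)=\lfloor n/3\rfloor$.

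Combining the two inequalities yields $c_2(n,K_4^-)=\lfloor n/3\rfloor$. The upper bound is quite short once the apex/non-apex dichotomy is isolated, and it reproves the upper bounds in Theorem~\ref{ASY K_4^-}; I expect the genuinely delicate part to be the lower-bound construction — choosing $G$ uniformly in $n$ (the parity obstruction to $\lfloor n/3\rfloor$-regularity occurs in exactly one residue class modulo $6$) and checking that all three codegree types meet the bound $\lfloor n/3\rfloor$.
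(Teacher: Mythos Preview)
Your template matches the paper's exactly: both isolate the same observation (a vertex $v$ is uncovered iff its link graph $L_v$ is triangle-free and no hyperedge avoiding $v$ spans a $P_2$ in $L_v$), and both build the lower-bound example by fixing a special vertex, prescribing a triangle-free link graph $G$, and declaring a triple in $\binom{W}{3}$ to be an edge iff it carries at most one $G$-edge. The differences are in execution. First, you reprove the upper bound directly (your disjointness count $|S|+|N_{L_v}(a)|+|N_{L_v}(b)|\le n-1$ is exactly the Falgas--Ravry--Zhao argument the paper simply cites via Theorem~\ref{ASY K_4^-}). Second, for the lower bound the paper gives explicit link graphs $G_1,G_2,G_3$ (blow-ups of small gadgets plus matchings) only for the three residue classes $n\equiv 0,3,4\pmod 6$ left open by Theorem~\ref{ASY K_4^-}, whereas you work uniformly by imposing abstract degree conditions on $G$ and appealing to circulants or Gale--Ryser for existence. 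Your route is cleaner and handles all residues at once; the paper's is more concrete and self-contained.

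Two small gaps to patch. (i) Your ``imposed conditions'' on $G$ are $\delta(G)=\lfloor n/3\rfloor$ and $d_G(x)+d_G(y)\le n-\lfloor n/3\rfloor$ on edges, but the non-edge codegree $n-3-|N_G(x)\cap N_G(y)|\ge\lfloor n/3\rfloor$ requires $|N_G(x)\cap N_G(y)|\le n-3-\lfloor n/3\rfloor$, which you do not list; it does follow from your two conditions (they force $\Delta(G)\le n-2\lfloor n/3\rfloor$), but only once $\lfloor n/3\rfloor\ge 3$, so $n\le 8$ needs a word. (ii) In the parity-obstructed class $n\equiv 4\pmod 6$ a bipartite $G$ on $6m+3$ vertices cannot have all degrees $2m+1$ with a \emph{single} exceptional vertex of degree $2m+2$ (the bipartite double-counting forces the degree surplus on the two sides to differ by a multiple of $2m+1$); you need, e.g., parts of sizes $3m+1,3m+2$ with all $2m+1$ high-degree vertices on the smaller side --- this is what Gale--Ryser actually gives --- or else a non-bipartite $G$ as in the paper's Construction~3, where exactly one vertex has degree $2m+2$.
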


The theorem solved Problem~\ref{PROB: p1} completely.

\begin{thm}\label{exact K_5^-}
$c_2(n,K_5^-)=\lfloor\frac{2n-2}{3}\rfloor$.
\end{thm}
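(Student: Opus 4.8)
\section*{Proof proposal for Theorem~\ref{exact K_5^-}}

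Since Theorem~\ref{ASY K_5^-} already supplies the upper bound $c_2(n,K_5^-)\le\lfloor\tfrac{2n-2}{3}\rfloor$, the whole task is the matching lower bound: for all large enough $n$ I want a $3$-graph $H$ on $n$ vertices with $\delta_2(H)\ge\lfloor\tfrac{2n-2}{3}\rfloor$ that admits no $K_5^-$-covering (note this target is exactly one more than the lower bound in Theorem~\ref{ASY K_5^-}, so only a gap of $1$ has to be closed). The plan is to design $H$ so that a single distinguished vertex $v$ lies in no copy of $K_5^-$; then $H$ trivially has no $K_5^-$-covering. Partition $V(H)\setminus\{v\}$ into parts $W_1,W_2,W_3$ of sizes as equal as possible, and take as the edges of $H$: (i) all triples $\{v,x,y\}$ with $x,y$ in different parts; (ii) all triples inside $W_1\cup W_2\cup W_3$ meeting some $W_i$ in at least two vertices (call these \emph{fat}); and (iii) a family $\mathcal{L}$ of \emph{rainbow} triples (one vertex in each part) that is \emph{linear}, meaning no two vertices from distinct parts lie in more than one member of $\mathcal{L}$.

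First I would check that $v$ is uncovered. A copy of $K_5^-$ through $v$ is a $5$-set $\{v,a,b,c,d\}$ missing at most one of its ten triples; in particular at most one of the six triples through $v$ is a non-edge. Since $\{v,x,y\}\notin E(H)$ exactly when $x,y$ are in the same $W_i$, a short count of the $v$-triples shows that of the four patterns $(4,0,0),(3,1,0),(2,2,0),(2,1,1)$ by which $\{a,b,c,d\}$ can meet the parts, only $(2,1,1)$ leaves at most one non-edge through $v$. So suppose $a,a'\in W_i$, $b\in W_j$, $c\in W_k$ with $i,j,k$ distinct. The triples $\{a,a',b\}$ and $\{a,a',c\}$ are fat, hence edges, while $\{a,b,c\}$ and $\{a',b,c\}$ are rainbow; both contain the cross-pair $\{b,c\}$, so by linearity of $\mathcal{L}$ at most one of them is an edge. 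Hence the $5$-set has at least two non-edges and is not a $K_5^-$, and $v$ is in no $K_5^-$.

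Next I would verify $\delta_2(H)\ge\lfloor\tfrac{2n-2}{3}\rfloor$ by running over pair types. A pair $\{v,x\}$ with $x\in W_i$ has codegree $|W_j|+|W_k|$, which equals $\lfloor\tfrac{2n-2}{3}\rfloor$ when $W_i$ is a largest part and exceeds it otherwise; a same-part pair $\{x,y\}\subseteq W_i$ has codegree $n-3$ (every fat triple through it is an edge), comfortably above the target for $n\ge 7$. The delicate case is a cross-pair $\{x,y\}$ with $x\in W_i$, $y\in W_j$, $i\ne j$: its codegree is $(|W_i|-1)+(|W_j|-1)+1+\big|\{T\in\mathcal{L}:\{x,y\}\subseteq T\}\big|$, where the ``$+1$'' comes from $\{v,x,y\}$. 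Linearity caps the last term at $1$, and to reach the target it must \emph{equal} $1$ for the cross-pairs in the relevant part-pairs, i.e. $\mathcal{L}$ must cover those cross-pairs exactly once. Here the bookkeeping splits according to $n\bmod 3$ (which fixes the part sizes), and in each case the required $\mathcal{L}$ is produced explicitly: index the parts by suitable cyclic groups and set $\mathcal{L}=\{\{x,y,z\}: x+y\equiv z\}$; linearity and the covering property are then immediate, as is the count of how many members of $\mathcal{L}$ pass through a given cross-pair.

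The main obstacle is really the design idea. The first thing one tries, namely making every rainbow triple a non-edge (so $\mathcal{L}$ is empty), keeps $v$ uncovered but leaves each cross-pair one short of $\lfloor\tfrac{2n-2}{3}\rfloor$, reproducing only the old lower bound of Theorem~\ref{ASY K_5^-}. The key realization is that one may safely add back a \emph{maximal linear} family of rainbow triples: linearity is exactly what stops a $(2,1,1)$ $5$-set from becoming a $K_5^-$ through $v$, while a linear tripartite $3$-graph covering each cross-pair once raises precisely the codegrees that were deficient. Once this is in place, the remaining work---choosing the part sizes by residue, building $\mathcal{L}$ with the correct covering pattern, and dispatching a few small values of $n$ by hand---is routine.
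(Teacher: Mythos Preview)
Your proposal is correct and follows essentially the same construction and analysis as the paper: a distinguished vertex $v$, three nearly equal parts, all ``fat'' triples as edges, all cross-pairs through $v$ as edges, and a linear tripartite system of rainbow triples to top up the deficient cross-pair codegrees; the uncovering argument via the $(2,1,1)$ pattern and linearity is likewise the same. The only difference is cosmetic: the paper builds the linear system $T$ by invoking K\"onig's theorem to decompose $K(V_1,V_2)$ into $m$ matchings $M_1,\dots,M_m$ and pairing $M_i$ with the $i$th vertex of $V_3$, whereas you propose an explicit additive construction $x+y\equiv z$ over a cyclic group; both yield a tripartite linear system covering exactly the cross-pairs that need the extra $+1$.
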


This result completes Theorem~\ref{ASY K_5^-}.

%~\cite{HZZ-JCTA17} showed that $t_1(n,K_{a,b,c})=(6-4\sqrt{2}+o(1))\binom{n}{2}$ for certain $a,b,c$, in these cases, $c_1(n,F)$ is asymptotically equal to $t_1(n,F)$.

%We give an example to show that $c_i(n,F)$ may be equal to $ex_i(n,F)$. It is easy to see that $ex_2(n,C_6^{(3,1)})=1$.

%\begin{thm}\label{exact C_6^{(3,1)}}
%$c_2(n,C_6^{(3,1)})=1$.
%\end{thm}

The following are some definitions and notation used in our proofs.
For a $k$-graph $H$ and $x\in V(H)$, the {\em link graph}  of $x$, denoted by $H(x)$,  is the $(k-1)$-graph with {vertex set} $V(H)\setminus\{x\}$ and edge set $N_H(x)$.
Given a graph $G$ and a positive integer vector ${\bf k}\in Z^{V(G)}_+$, the {\it ${\bf k}$-blowup} of $G$, denoted by $G^{(\bf k)}$, is the graph obtained by replacing every vertex $v$ of $G$ with ${\bf k}(v)$ different vertices where a copy of $u$ is adjacent to a copy of $v$ in the blowup graph if and only if $u$ is adjacent to $v$ in $G$. We call the collection of copies of $v\in V(G)$ in $G^{(\bf k)}$ the blowup of $v$.
%When all ${\bf k}(v)$ are equal to some positive integer $k$, the corresponding blowup is called balanced and denoted simply by $G^{(k)}$.
When there is no confusion, we write $ab$ and $abc$ as a shorthand for $\{a,b\}$ and $\{a,b,c\}$, respectively.
Given two $r$-graphs $H$ and $F$, we say $H$ is $F$-free if $H$ contains no subgraph isomorphic to $F$. Given a positive integer $n$, write $[n]$ for the set $\{1,2, \ldots, n\}$.

In the rest of the note, we give proofs of Theorems~\ref{exact K_4^-} and~\ref{exact K_5^-}.

%In this paper, we focus on $3$-graphs. Let $K_t$ denote the complete $3$-graph on $t$ vertices and let $K_t^-$ denote the $3$-graph obtained from $K_t$ by removing one $3$-edge. Given $1\leq \ell<k$, a $k$-graph $C_s^{(k,\ell)}$ on $s$ vertices is an $\ell$-cycle if there is a cyclic ordering of its vertices such that every edge consists of $k$ consecutive vertices under this order and two consecutive edges intersect in exactly $\ell$ vertices.

\section{Proof of Theorems~\ref{exact K_4^-} and~\ref{exact K_5^-} }
We will construct extremal 3-graphs for $K_4^-$ and $K_5^-$ with minimum codegree matching the upper bounds in Theorems~\ref{ASY K_4^-} and~\ref{ASY K_5^-}, respectively.
%This is enough to our proofs.

%\noindent{\bf 2.1 Proof of Theorem~\ref{exact K_4^-}}
\subsection{Proof of Theorem~\ref{exact K_4^-}}

We first give an observation, which can be verified directly from the definitions.
\begin{obs}\label{OBS: o1}
Let $H$ be a $3$-graph and $x\in V(H)$. If $H(x)$
%, the link graph of $x$,
is triangle-free and the subgraph of $H(x)$ induced by an edge $e\in E(H)$ with $x\notin e$ is $P_2$-free,
%any two incident edges of $L_H(x)$ are not contained in an edge of $H$,
then $x$ can not be covered by a $K_4^-$ in $H$, where $P_2$ is a path of length two.
\end{obs}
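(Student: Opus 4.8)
The plan is a short case analysis on which vertex of a hypothetical copy of $K_4^-$ the vertex $x$ plays. Recall that $K_4^-$ has exactly one vertex of degree $3$, contained in all three of its edges --- call it the \emph{apex} --- while each of its other three vertices has degree $2$. So I would suppose, for contradiction, that some copy $K$ of $K_4^-$ in $H$ covers $x$, and write $V(K)=\{x,u,v,w\}$; then $x$ is either the apex of $K$ or one of the three degree-$2$ vertices, and these two possibilities are exhaustive.

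First I would treat the case where $x$ is the apex of $K$. Then $xuv$, $xuw$, $xvw\in E(H)$, so $uv$, $uw$, $vw$ are all edges of the link graph $H(x)$, and these three edges form a triangle in $H(x)$. This contradicts the hypothesis that $H(x)$ is triangle-free.

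Next I would treat the case where $x$ is not the apex. Without loss of generality let $u$ be the apex, so the three edges of $K$ are $uxv$, $uxw$, and $uvw$. From the first two edges one gets $uv,uw\in E(H(x))$. Now $e:=uvw$ is an edge of $H$ with $x\notin e$, and the subgraph of $H(x)$ induced on $e=\{u,v,w\}$ contains the two edges $uv$ and $uw$, hence a path of length two centred at $u$, i.e.\ a $P_2$. This contradicts the hypothesis that the subgraph of $H(x)$ induced by $e$ is $P_2$-free. Since the two cases are exhaustive, $x$ cannot be covered by a $K_4^-$ in $H$.

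There is essentially no hard step here; the only points requiring care are that the two cases genuinely cover every vertex of $K_4^-$, and that ``$P_2$-free'' is used in the sense of containing no path on two edges, so that a three-vertex subgraph carrying two edges already violates it. I would keep the write-up to a few lines, since it is exactly the routine verification promised by the phrase ``can be verified directly from the definitions.''
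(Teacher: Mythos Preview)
Your argument is correct and is precisely the routine verification the paper alludes to; the paper itself does not write out a proof, merely stating that the observation ``can be verified directly from the definitions.'' Your two-case split on whether $x$ is the apex of the putative $K_4^-$ is the natural and essentially unique way to unpack that remark.
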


By Theorem~\ref{ASY K_4^-}, to show Theorem~\ref{exact K_4^-}, it is sufficient to construct  3-graphs $H$ on $n$ vertices for $n\equiv 0, 3,4 \pmod 6$ and with  $\delta_2(H)=\lfloor \frac n3\rfloor$ such that $H$ has no $K_4^-$-covering. We distinguish the proof into three cases.
%We first define three triangle-free graphs.
Let $C_6$ be the 6-cycle $v_1v_2v_3v_4v_5v_6v_1$.

\vspace{5pt}
\noindent{\bf Construction A:} Let $G_1$ be the graph obtained from $C_6$ and the 5-cycle $123451$ by adding the edges $1v_1, 1v_3, 2v_2, 2v_5, 3v_4, 3v_6, 4v_3, 4v_5, 5v_2, 5v_6$.

\vspace{5pt}
\noindent{\bf Construction B:} Let $G_2$ be the graph obtained from $C_6$ and the 8-cycle $123456781$ by adding the edges $1v_1, 1v_3, 2v_2, 2v_6, 3v_1, 3v_5, 4v_3, 4v_6, 5v_2, 5v_4, 6v_3, 6v_5, 7v_4, 7v_6, 8v_2, 8v_5$.

\vspace{5pt}
\noindent{\bf Construction C:} Let $G_3$  be the graph obtained from $C_6$ and the 8-cycle $123456781$ by adding a new vertex 9 and the edges $19, 39,79$, $1v_1,1v_3,2v_2,2v_6,3v_1,3v_4,4v_3,4v_5,5v_4$, $5v_6, 6v_1,6v_5,7v_3,7v_6,8v_2,8v_4, 9v_2, 9v_5$.

It can be checked that  $G_1, G_2, G_3$ are triangle-free; therefore, so are the blowups of them.

%\noindent{\bf Construction $1$:$n=6m$.}
\vspace{5pt}
\noindent{\bf Case $1$.} $n=6m$ for some integer $m\ge 1$.
\vspace{5pt}

Define a positive integer vector ${\bf k}_1\in Z_+^{V(G_1)}$ by ${\bf k}_1(v_i)=m-1$ for $i\in [6]$ and ${\bf k}_1(i)=1$ for $i\in [5]$.

\begin{cons}\label{Construction A:} Let $V_1, \ldots, V_6$ be six disjoint sets of the same size $m-1$ and let $x$ be a specific vertex. Define the 3-graph $H_1$ on vertex set $\{x\}\cup [5]\cup (\cup_{i=1}^6V_i)$ such that the following holds:
\begin{itemize}
\item[(1)]
The link graph of $x$, $H_1(x)$, consists of the ${\bf k}_1$-blowup of $G_1$ by replacing $v_i$ by $V_i$ for $i\in [6]$ and a perfect matching between $V_1$ and $V_4$.
%, a cycle $123451$ on $[5]$, and all edges of types $1V_1, 1V_3, 2V_2, 2V_5, 3V_4, 3V_6, 4V_3, 4V_5, 5V_2$ and $5V_6$.

\item[(2)]
A triple $abc\in E(H_1)$ if $x\notin\{a,b,c\}$ and the subgraph induced by $\{a,b,c\}$ in ${H_1}(x)$ is $P_2$-free.

\end{itemize}
\end{cons}
 %We construct a $3$-graph $F_1$ on $V$ with $|V|=n$. Select a special vertex $x$ and a $5$-element set $[5]\subseteq V$. Split the remainder of the vertices into six parts $\bigsqcup_{i=1}^6V_i=V\backslash\{\{x\}\cup [5]\}$ with equal size, i.e., $|V_i|=m-1,i\in[6]$.

 %Put as the link graph of $x$ the blowup of a $6$-cycle through the six parts, i.e., add all triples of the form $xV_iV_{i+1}$ for $i\in[6]$, winding round modulo $6$ as necessary(identifying $V_7$ with $V_1$, and so on), a perfecting matching between $V_1$ and $V_4$, and a $5$-cycle $123451$ on $[5]$. Then add those triples involving $x$ which are of type $x1V_1,x1V_3,x2V_2,x2V_5,x3V_4,x3V_6,x4V_3,x4V_5,x5V_2,x5V_6.$ Finally add all triples not containing $x$ of form $abc$ if $F_1[\{a,b,c\}]$ spans at most one edge in $F_1(x)$.

\begin{claim}\label{CLAIM: c1}
$H_1$ contains no $K_4^-$-covering and $\delta_2(H_1)=2m=\lfloor\frac n3\rfloor$.
\end{claim}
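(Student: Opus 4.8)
The plan is to prove the two assertions of the claim separately. For the covering statement it suffices to exhibit a single uncovered vertex, and the natural candidate is the special vertex $x$. I would verify the two hypotheses of Observation~\ref{OBS: o1} for $H_1$ and $x$. The first, that $H_1(x)$ is triangle-free: the ${\bf k}_1$-blowup of $G_1$ is triangle-free because $G_1$ is, and the added perfect matching between $V_1$ and $V_4$ creates no triangle, since $v_1v_4\notin E(G_1)$ (so there is no blowup edge between $V_1$ and $V_4$) and $v_1,v_4$ have no common neighbour in $G_1$ (so no matching edge can be the base of a triangle). The second hypothesis, that for every edge $e\in E(H_1)$ with $x\notin e$ the subgraph of $H_1(x)$ induced on $e$ is $P_2$-free (equivalently, has at most one edge), is immediate: it is exactly clause (2) of Construction~\ref{Construction A:}. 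Hence $x$ lies in no $K_4^-$ and $H_1$ has no $K_4^-$-covering.

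For the codegree I would split the pairs of $V(H_1)$ into those containing $x$ and those not. For a pair $\{x,v\}$ one has $d_{H_1}(\{x,v\})=d_{H_1(x)}(v)$, so the point is that $H_1(x)$ is $2m$-regular. This is a direct computation on the fixed graph $G_1$: a vertex of $V_i$ has degree $\sum_{w\in N_{G_1}(v_i)}{\bf k}_1(w)$, plus $1$ when $i\in\{1,4\}$ coming from the matching edge; reading off the neighbourhoods in $G_1$ from Construction~A gives $(2m-1)+1=2m$ for $i\in\{1,4\}$ and $2m$ for $i\in\{2,3,5,6\}$, while each $j\in[5]$ has degree $\sum_{w\in N_{G_1}(j)}{\bf k}_1(w)=2m$. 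So $d_{H_1}(\{x,v\})=2m$ for every $v$.

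Now fix a pair $\{u,v\}$ with $x\notin\{u,v\}$, and count the $w$ with $uvw\in E(H_1)$; write $N(\cdot)$ for neighbourhoods in $H_1(x)$. The vertex $x$ is such a $w$ iff $uv\in E(H_1(x))$, and every other admissible $w$ must lie in $V(H_1(x))\setminus\{u,v\}$ and induce, with $u$ and $v$, a graph with at most one edge in $H_1(x)$. If $uv\in E(H_1(x))$, such a $w$ must miss both $N(u)$ and $N(v)$; since $H_1(x)$ is triangle-free, $N(u)\cap N(v)=\varnothing$, so the count is $|V(H_1(x))|-|N(u)\cup N(v)|=(6m-1)-4m=2m-1$, and adding $w=x$ gives codegree $2m$. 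If $uv\notin E(H_1(x))$, a $w\ne x$ fails only when it is adjacent to both $u$ and $v$, so the codegree equals $(6m-3)-|N(u)\cap N(v)|\ge (6m-3)-2m=4m-3\ge 2m$, using $m\ge2$ (which we may assume, since the reduction via Theorem~\ref{ASY K_4^-} only requires $n\ge7$; the case $n=6$ is a trivial direct check, where $H_1(x)$ is a $5$-cycle and all codegrees equal $2$). Combining the three cases, $\delta_2(H_1)=2m=\lfloor n/3\rfloor$.

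The only mildly delicate points I anticipate are (i) confirming that the matching between $V_1$ and $V_4$ keeps $H_1(x)$ triangle-free — this is precisely what raises the degrees of the $V_1$- and $V_4$-vertices from $2m-1$ up to the required $2m$, so it is essential and not merely cosmetic — and (ii) the non-edge case of the codegree count, where triangle-freeness of $H_1(x)$ is again needed to control $|N(u)\cap N(v)|$; the rest is routine bookkeeping on the explicit graph $G_1$.
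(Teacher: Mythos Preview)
Your proposal is correct and follows essentially the same route as the paper's proof: verify Observation~\ref{OBS: o1} for $x$ via triangle-freeness of $H_1(x)$ (using that $v_1,v_4$ have no common neighbour in $G_1$) together with clause~(2), check $2m$-regularity of $H_1(x)$ for pairs through $x$, and split the remaining pairs according to whether $uv\in E(H_1(x))$, obtaining codegree exactly $2m$ in the edge case and at least $4m-3$ in the non-edge case (with $m=1$ handled directly). One small slip in your closing remark: in the non-edge case the bound $|N(u)\cap N(v)|\le 2m$ comes simply from $|N(u)|=2m$, not from triangle-freeness; triangle-freeness is what gives $N(u)\cap N(v)=\varnothing$ in the edge case.
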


\noindent{\bf Proof of Claim 1. }
By the definition of $G_1$, $v_1$ and $v_4$ have no common neighbor. So   by (1) of Construction~\ref{Construction A:}, ${H_1}(x)$ is triangle-free. By (2) of Construction~\ref{Construction A:}, any two incident edges of ${H_1}(x)$ are not contained in one edge of $H_1$. By Observation~\ref{OBS: o1}, $x$ is contained in no copy of $K_4^-$ in $H_1$. So $H_1$ has no  $K_4^-$-covering. %Let us now compute the minimum codegree of $F_1$.

 By (1) of Construction~\ref{Construction A:},  one can check that ${H_1}(x)$ is $2m$-regular. So  $d_{H_1}(x,a)=2m$ for all $a\in V\setminus\{x\}$.
Now we consider the degree of the pair $\{a,b\}$  with $x\notin\{a,b\}$.
If  $ab\in E({H_1}(x))$, then by (2) of Construction~\ref{Construction A:}, $N_{H_1}(x,a)\cap N_{H_1}(a,b)=\emptyset$, $N_{H_1}(x,b)\cap N_{H_1}(a,b)=\emptyset$ and $N_{H_1}(x,a)\cap N_{H_1}(x,b)=\emptyset$; or equivalently, for any $c\notin N_{H_1}(x,a)\cup N_{H_1}(x,b)$, $\{a,b,c\}$ forms an edge of $H_1$.
So $d_{H_1}(a,b)=6m-2\times 2m=2m$.
%we distinguish $ab$ into six cases. If $a\in V_i$, $b\in V_{i+1}$ and $\{i, i+1\}\cap\{1,4\}\not=\emptyset$, for example for $i=1, i+1=2$,  assume $a'\in V_4$ such that $aa'\in E(L_{H_1}(x))$, by (2) of the construction of $H_1$, we have $N_{H_1}(a,b)=(V_4\setminus\{a'\})\cup V_5\cup\{3,4,x\}$. So $d_{H_1}(a,b)=m-2+m-1+3=2m$.
 %(NEED MORE DETAILS, FOR EXAMPLE, separate them into cases $a\in V_1, b\in V_4$ and others, XM0909, the same for the next case)}
%If $a\in V_i$, $b\in V_{i+1}$ and $\{i, i+1\}\cap\{1,4\}=\emptyset$, for example for $i=2, i+1=3$,  by (2) of the construction of $H_1$, we have $N_{H_1}(a,b)=V_5\cup V_6\cup\{3,x\}$. So $d_{H_1}(a,b)=2(m-1)+2=2m$. If $a\in V_1, b\in V_4$ or vise verse, by (2) of the construction of $H_1$, we have $N_{H_1}(a,b)=(V_1\setminus\{a\})\cup (V_4\setminus\{b\})\cup\{2,4,5, x\}$. So, $d_{H_1}(a,b)=2(m-2)+4=2m$. If $a\in V_i$, $b\in [5]$ and $i\in\{1,4\}$, say $i=1$, assume $a'\in V_4$ such that $aa'\in E(L_{H_1}(x))$, then, by (2) of the construction of $H_1$,  $b=1$  and $N_{H_1}(a,b)=(V_4\setminus\{a'\})\cup V_5\cup\{3,4,x\}$. So we have $d_{H_1}(a,b)=m-2+m-1+3=2m$. If $a\in V_i$, $b\in [5]$ and $i\notin\{1,4\}$, for example for $i=2$ and $b=2$, by (2) of the construction of $H_1$,  $N_{H_1}(a,b)=V_4\cup V_6\cup\{4,x\}$. So we have $d_{H_1}(a,b)=2(m-1)+2=2m$.  Finally, we assume $a,b \in [5]$, without loss of generality, assume $a=1, b=2$. By (2) of the construction of $H_1$, we also have $N_{H_1}(a,b)=V_4\cup V_6\cup\{4,x\}$. So  $d_{H_1}(a,b)=2(m-1)+2=2m$.
If $ab\notin E({H_1}(x))$ then $x\notin N_{H_1}(a,b)$. By (2) of the construction of $H_1$, $N_{H_1}(x,a)\cap N_{H_1}(x,b)\cap N_{H_1}(a,b)=\emptyset$; or equivalently, for any $c\notin (N_{H_1}(x,a)\cap N_{H_1}(x,b))\cup\{x,a,b\}$, we have $abc\in E(H_1)$.
%By the construction of ${H_1}(x)$, $|N_{H_1}(x,a)\cap N_{H_1}(x,b)|\le 2m-1$.
So $d_{H_1}(a,b)=6m-3-|N_{H_1}(a,x)\cap N_{H_1}(b,x)|\ge 4m-3\ge 2m$ if $m>1$. If $m=1$, then $H_1(x)$ is the 5-cycle $123451$, one can check that $d_{H_1}(a,b)\ge 2=2m$.
%, which for $m\geq 2$ is greater than $2m$, when $m=1$, it is easy to check that $d(a,b)=2m$.
%\end{proof}
% Up to the choice of $a,b$, this covers all possible pairs in $F_1$, thus
% $$c_2(n,K_4^-)\geq \delta_2(F_1)=2m,n=6m.$$

Case 1 follows directly from Claim~\ref{CLAIM: c1}.

\vspace{5pt}
%\noindent{\bf Construction $2$:$n=6m+3$.}
\noindent{\bf Case 2: } $n=6m+3$ for some integer $m\ge 1$.

Define a positive integer vector ${\bf k}_2\in Z_+^{V(G_2)}$ by ${\bf k}_2(v_i)=m-1$ for $i\in [6]$ and ${\bf k}_2(i)=1$ for $i\in [8]$.

\begin{cons}\label{Construction B:}
 Let $V_1, \ldots, V_6$ be six disjoint sets of the same size $m-1$ and let $x$ be a specific vertex. Define the 3-graph $H_2$ on vertex set $\{x\}\cup [8]\cup (\cup_{i=1}^6V_i)$ such that the following holds:
\begin{itemize}
\item[(1)]
The link graph of $x$, $H_2(x)$, consists of the ${\bf k}_2$-blowup of $G_2$ by replacing $v_i$ with $V_i$ for $1\le i\le 6$,  a perfect matching between $V_1$ and $V_4$ and a matching $\{15, 26, 37, 48\}$.
%, and all edges of types $1V_1, 1V_3, 2V_2, 2V_6, 3V_1, 3V_5, 4V_3, 4V_6, 5V_2, 5V_4, 6V_3, 6V_5, 7V_4, 7V_6, 8V_2, 8V_5.$
%({\color{red}A little mess about the edges between $[8]$ and $V_1, \ldots, V_6$, CAN WE GIVE ANOTHER EQUIVALENT CONNECTION BETWEEN THEM? XM0917})
%$1V_1, 1V_3, 2V_2, 2V_5, 3V_4, 3V_6, 4V_3, 4V_5, 5V_2$ and $5V_6$.

\item[(2)]
A triple $abc\in E(H_2)$ if $x\notin\{a,b,c\}$ and the subgraph induced by $\{a,b,c\}$ in ${H_2}(x)$ is $P_2$-free.
%spans at most one edge in $H_2(x)$ are edges of $H_2$.

\end{itemize}
\end{cons}

% We construct a $3$-graph $F_2$ on $V$ with $|V|=n$. Select a special vertex $x$ and a $8$-element set $[8]\subseteq V$. Split the remainder of the vertices into six parts $\bigsqcup_{i=1}^6V_i=V\backslash\{\{x\}\cup [8]\}$ with equal size, i.e., $|V_i|=m-1,i\in[6]$.

% Put as the link graph of $x$ the blowup of a $6$-cycle through the six parts, i.e., add all triples of the form $xV_iV_{i+1}$ for $i\in[6]$, winding round modulo $6$ as necessary(identifying $V_7$ with $V_1$, and so on), a perfecting matching between $V_1$ and $V_4$, a $8$-cycle $123456781$ on $[8]$ and a perfect matching on $[8]$ with edges $15,26,37,48$. Then add those triples involving $x$ which are of type $x1V_1,x1V_3,x2V_2,x2V_6,x3V_1,x3V_5,x4V_3,x4V_6,x5V_2,x5V_4,$ $x6V_3,x6V_5,x7V_4,x7V_6,x8V_2,x8V_5.$ Finally add all triples not containing $x$ of form $abc$ if $F_2[\{a,b,c\}]$ spans at most one edge in $F_2(x)$.

\begin{claim}\label{CLAIM: c2}
$H_2$ contains no $K_4^-$-covering and $\delta_2(H_2)=2m+1=\lfloor\frac n3\rfloor$.
\end{claim}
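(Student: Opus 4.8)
The plan is to mirror the proof of Claim~\ref{CLAIM: c1} almost verbatim, since $H_2$ is built by the same recipe as $H_1$ with $G_2$ in place of $G_1$ and one extra matching $\{15,26,37,48\}$ thrown into the link graph. First I would verify the non-covering statement: by construction, $H_2(x)$ is the ${\bf k}_2$-blowup of $G_2$ together with a perfect matching between $V_1$ and $V_4$ and the matching $\{15,26,37,48\}$; since $G_2$ is triangle-free and $v_1,v_4$ have no common neighbour in $G_2$ (and $1,5,\dots$ are only joined to blown-up vertices, not to each other beyond the $8$-cycle), adding these matchings creates no triangle, so $H_2(x)$ is triangle-free. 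Condition (2) guarantees that no two incident edges of $H_2(x)$ lie in a common edge of $H_2$, so the subgraph of $H_2(x)$ induced by any edge $e\in E(H_2)$ with $x\notin e$ is $P_2$-free. Observation~\ref{OBS: o1} then gives that $x$ is in no copy of $K_4^-$, hence $H_2$ has no $K_4^-$-covering.

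Next I would compute $\delta_2(H_2)$. The key input is that $H_2(x)$ is $(2m+1)$-regular: each blown-up vertex gets $2(m-1)$ neighbours from the two $G_2$-neighbours among the $v_i$'s (after blowup), plus $2$ from the degree-$2$ attachments to $\{1,\dots,8\}$ in $G_2$, plus $1$ from either the $V_1$--$V_4$ perfect matching or from the matching $\{15,26,37,48\}$ — this needs a short case check per vertex class, but it comes out to $2m+1$ everywhere. Each vertex $i\in[8]$ has degree $2$ in the $8$-cycle plus $2(m-1)$ from its two blown-up neighbours in $G_2$ plus $1$ from the matching $\{15,26,37,48\}$, again $2m+1$. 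Hence $d_{H_2}(x,a)=2m+1$ for all $a\ne x$. For a pair $\{a,b\}$ with $x\notin\{a,b\}$: if $ab\in E(H_2(x))$, condition (2) forces $N_{H_2}(x,a)$, $N_{H_2}(x,b)$, $N_{H_2}(a,b)$ to be pairwise disjoint, so $d_{H_2}(a,b)=n-2d_{H_2(x)}\text{-degree}=6m+3-2(2m+1)=2m+1$. If $ab\notin E(H_2(x))$, then $x\notin N_{H_2}(a,b)$ and $abc\in E(H_2)$ for every $c\notin(N_{H_2}(x,a)\cap N_{H_2}(x,b))\cup\{x,a,b\}$, so $d_{H_2}(a,b)\ge 6m+3-3-(2m+1)=4m-1\ge 2m+1$ for $m\ge1$. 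Therefore $\delta_2(H_2)=2m+1=\lfloor n/3\rfloor$.

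The main obstacle, and the only place where genuine checking is required rather than copying Case~1, is confirming that the auxiliary graph $G_2$ really has the properties claimed: triangle-freeness, and that $v_1$ and $v_4$ have no common neighbour in $G_2$ (so that adding the $V_1$--$V_4$ perfect matching keeps the link graph triangle-free), and that after blowup the degree count lands exactly on $2m+1$ for \emph{every} vertex — including the need for the two extra matchings rather than just one, which is why $H_2(x)$ has one more edge per vertex than a plain blowup of $C_6$ would. This amounts to a finite inspection of the explicit edge list in Construction~B, so I would carry it out by listing, for each $v_i$, its neighbours among $\{1,\dots,8\}$ and checking they are independent in $G_2$ and that $|N_{G_2}(v_i)\cap[8]|=2$, and separately confirming the two prescribed matchings are disjoint from the blowup edges; the edge-degree computations then follow exactly as in the proof of Claim~\ref{CLAIM: c1}. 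The case $m=1$ (where all $V_i$ are empty and $H_2(x)$ reduces to the $8$-cycle plus the matching $\{15,26,37,48\}$) should be handled by a direct check, just as in Case~1.
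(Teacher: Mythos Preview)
Your plan is essentially identical to the paper's own proof: check that $H_2(x)$ is triangle-free (the paper does this by noting $N_{G_2}(v_1)\cap N_{G_2}(v_4)=\emptyset$ and $N_{G_2}(i)\cap N_{G_2}(i+4)=\emptyset$ for $i\in[4]$), invoke Observation~\ref{OBS: o1}, verify $(2m+1)$-regularity of $H_2(x)$, and then run the same two-case codegree calculation as in Claim~\ref{CLAIM: c1}. One small correction to your degree sketch: in $G_2$ the vertices $v_2,v_3,v_5,v_6$ each have \emph{three} neighbours in $[8]$, so vertices of $V_2,V_3,V_5,V_6$ already have degree $2(m-1)+3=2m+1$ from the blowup alone and receive nothing from either added matching (the matching $\{15,26,37,48\}$ touches only $[8]$); only $V_1,V_4$ need the extra $+1$ from the perfect matching between them --- your promised case check would catch this, and the separate $m=1$ treatment is unnecessary since $4m-1\ge 2m+1$ holds for all $m\ge1$.
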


\noindent{\bf Proof of Claim 2.} By the definition of $G_2$, $N_{G_2}(v_1)\cap N_{G_2}(v_4)=\emptyset$ and $N_{G_2}(1)\cap N_{G_2}(5)=N_{G_2}(2)\cap N_{G_2}(6)=N_{G_2}(3)\cap N_{G_2}(7)=N_{G_2}(4)\cap N_{G_2}(8)=\emptyset$.
 So by (1) of Construction~\ref{Construction B:}, ${H_2}(x)$ is triangle-free, too; and by (2) of Construction~\ref{Construction B:},  any two incident edges of $H_2(x)$ are not contained in one edge of $H_2$. By Observation~\ref{OBS: o1}, $x$ is contained in no copy of $K_4^-$ in $H_2$. So $H_2$ has no  $K_4^-$-covering. %Let us now compute the minimum codegree of $F_1$.

By (1) of Construction~\ref{Construction B:},  ${H_2}(x)$ is $(2m+1)$-regular. So  $d_{H_2}(x,a)=2m+1$ for all $a\in V(H_2)\setminus\{x\}$. Now assume $\{a,b\}\subseteq V(H_2)\setminus\{x\}$. If  $ab\in E({H_2}(x))$, then by (2) of Construction~\ref{Construction B:}, $N_{H_2}(x,a)\cap N_{H_2}(a,b)=\emptyset$, $N_{H_2}(x,b)\cap N_{H_2}(a,b)=\emptyset$ and $N_{H_2}(x,a)\cap N_{H_2}(x,b)=\emptyset$; or equivalently, for any $c\notin N_{H_2}(x,a)\cup N_{H_2}(x,b)$, $\{a,b,c\}$ forms an edge of $H_2$.
So $d_{H_2}(a,b)=6m+3-2 (2m+1)=2m+1$.
If $ab\notin E({H_2}(x))$ then $x\notin N_{H_2}(a,b)$. By (2) of the construction of $H_2$, $N_{H_2}(x,a)\cap N_{H_2}(x,b)\cap N_{H_2}(a,b)=\emptyset$; or equivalently, for any $c\notin (N_{H_2}(x,a)\cap N_{H_2}(x,b))\cup\{x,a,b\}$, $abc\in E(H_2)$.
%By (1) of the construction of $H_2$, $|N_{H_2}(x,a)\cap N_{H_1}(x,b)|\le 2m-1$.
So we have $d_{H_2}(a,b)=6m+3-3-|N_{H_2}(a,x)\cap N_{H_2}(b,x)|\ge 4m-1\ge  {2m+1}$.
%we have $d(a,b)=6m-1-2-|N(a,x)\cap N(b,x)|=4m-3$, which for $m\geq 2$ is greater than $2m$, when $m=1$, it is easy to check that $d(a,b)=2m$.
%\end{proof}
% Observe that $F_2$ is triangle-free and $F_2$ satisfies $(*)$, so $x$ is contained in no copy of $K_4^-$ in $F_2$. Let us now compute the minimum codegree of $F_2$.

% The link graph of $x$ is $(2m+1)$-regular. For $a,b\in V\backslash\{x\}$, $d(x,a)=2m+1$. If $ab\in F_2(x)$, then $d(a,b)=6m+2-2\times (2m+1)=1=2m+1$. If $ab\notin F_2(x)$, then $d(a,b)=6m+2-2-|N(a,x)\cap N(b,x)|=4m-1\geq 2m+1$.

% Up to the choice of $a,b$, this covers all possible pairs in $F_2$, thus
% $$c_2(n,K_4^-)\geq \delta_2(F_2)=2m+1,n=6m+3.$$

Case 2 follows from Claim~\ref{CLAIM: c2}.

\vspace{5pt}
\noindent{\bf Case $3$:} $n=6m+4$ for some integer $m\ge 1$.

Define a positive integer vector ${\bf k}_3\in Z_+^{V(G_3)}$ by ${\bf k}_3(v_i)=m-1$ for $i\in [6]$ and ${\bf k}_3(i)=1$ for $i\in [9]$.

\begin{cons}\label{Construction C:}
 Let $V_1, \ldots, V_6$ be six disjoint sets of the same size $m-1$ and let $x$ be a specific vertex. Define a 3-graph $H_3$ on vertex set $\{x\}\cup [9]\cup (\cup_{i=1}^6V_i)$ such that the following holds:
\begin{itemize}
\item[(1)]
The link graph of $x$, $H_3(x)$, consists of the ${\bf k}_3$-blowup of $G_3$ and
%a cycle $v_1v_2\ldots v_6v_1$ by replacing $v_i$ by $V_i$ for $1\le i\le 6$,
%a perfecting matching between $V_1$ and $V_4$, a cycle $123456781$ on $[8]$,
a matching $\{15, 26, 48\}$.
%, edges $\{19, 39, 79\}$ and all edges of types $9V_2, 9V_5$ connecting vertex 9 to $[8]\cup V_2\cup V_5$
%$1V_1, 1V_3, 2V_2, 2V_6, 3V_1, 3V_5, 4V_3, 4V_6, 5V_2, 5V_4, 6V_3, 6V_5, 7V_4, 7V_6, 8V_2, 8V_5.$ ({\color{red}A little mess about the edges between $[8]$ and $V_1, \ldots, V_6$, CAN WE GIVE ANOTHER EQUIVALENT CONNECTION BETWEEN THEM? XM0917})
%$1V_1, 1V_3, 2V_2, 2V_5, 3V_4, 3V_6, 4V_3, 4V_5, 5V_2$ and $5V_6$.

\item[(2)]
A triple $abc\in E(H_3)$ if $x\notin \{a,b,c\}$ and the subgraph induced by  $\{a,b,c\}$ in ${H_3}(x)$ is $P_2$-free.
%and $\{a,b,c\}$ spans at most one edge in $H_2(x)$ are edges of $H_2$.

\end{itemize}
\end{cons}

 %We construct a $3$-graph $F_3$ on $V$ with $|V|=n$. Select a special vertex $x$ and a $9$-element set $[9]\subseteq V$. Split the remainder of the vertices into six parts $\bigsqcup_{i=1}^6V_i=V\backslash\{\{x\}\cup [9]\}$ with equal size, i.e., $|V_i|=m-1,i\in[9]$.

% Put as the link graph of $x$ the blowup of a $6$-cycle through the six parts, i.e., add all triples of the form $xV_iV_{i+1}$ for $i\in[6]$, winding round modulo $6$ as necessary(identifying $V_7$ with $V_1$, and so on), a $8$-cycle $123456781$ on $[8]$ and edges $15,26,48,19,39,79$ on $[9]$. Then add those triples involving $x$ which are of type $x1V_1,x1V_3,x2V_2,x2V_6,x3V_1,x3V_4,x4V_3,x4V_5,x5V_4,x5V_6,x6V_1,x6V_5,x7V_3,x7V_6,x8V_2,$ $x8V_4,x9V_2,x9V_5.$ Finally add all triples not containing $x$ of form $abc$ if $F_3[\{a,b,c\}]$ spans at most one edge in $F_3(x)$.

\begin{claim}\label{CLAIM: c3}
$H_3$ contains no $K_4^-$-covering and $\delta_2(H_3)=2m+1=\lfloor\frac n3\rfloor$.
\end{claim}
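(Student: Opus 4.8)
The plan is to mirror exactly the proofs of Claims 1 and 2, which are the template for all three cases. First I would establish that $H_3$ has no $K_4^-$-covering. By Observation~\ref{OBS: o1}, it suffices to show that $H_3(x)$ is triangle-free and that the subgraph of $H_3(x)$ induced by any edge of $H_3$ avoiding $x$ is $P_2$-free. The second property is immediate from part~(2) of Construction~\ref{Construction C:}, since an edge $abc$ with $x\notin\{a,b,c\}$ is declared to be in $E(H_3)$ precisely when $\{a,b,c\}$ induces a $P_2$-free graph in $H_3(x)$. For the triangle-freeness of $H_3(x)$: the graph $G_3$ is triangle-free (stated in the excerpt), hence so is any blowup; adding the matching $\{15,26,48\}$ can create a triangle only if one of the pairs $1,5$ or $2,6$ or $4,8$ has a common neighbour in $G_3$ (or in the blowup), so I would verify from the edge list of Construction C that $N_{G_3}(1)\cap N_{G_3}(5)=N_{G_3}(2)\cap N_{G_3}(6)=N_{G_3}(4)\cap N_{G_3}(8)=\emptyset$. (Note the blowup vertices $V_i$ cannot be common neighbours of two labelled vertices unless the corresponding labelled vertices $v_i$ are, so checking $G_3$ suffices.) This gives that $x$ lies in no $K_4^-$, so $H_3$ has no $K_4^-$-covering.

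Next I would compute $\delta_2(H_3)$. The key structural fact to check is that $H_3(x)$ is $(2m+1)$-regular: the ${\bf k}_3$-blowup of $G_3$ contributes the blown-up degrees, and the extra matching $\{15,26,48\}$ adds exactly one to the degree of each of the vertices $1,2,4,5,6,8$; one verifies from the edge list that this makes every vertex of $H_3(x)$ have degree $2m+1$ (this is precisely where the particular choice of $G_3$ and of the matching is designed to work out — the vertices $3,7,9$ and the blowup classes must already have degree $2m+1$ without the matching). Granting regularity, $d_{H_3}(x,a)=2m+1$ for all $a\neq x$. For a pair $\{a,b\}$ with $x\notin\{a,b\}$: if $ab\in E(H_3(x))$, then part~(2) forces $N_{H_3}(x,a)$, $N_{H_3}(x,b)$, $N_{H_3}(a,b)$ to be pairwise disjoint and to partition $V(H_3)\setminus\{x,a,b\}$ together with $x$, giving $d_{H_3}(a,b)=(6m+4)-2(2m+1)=2m+2\geq 2m+1$; wait — more carefully, as in Claims 1 and 2, for any $c\notin N_{H_3}(x,a)\cup N_{H_3}(x,b)$ we get $abc\in E(H_3)$, so $d_{H_3}(a,b)=(6m+4)-2(2m+1)=2m+2\ge 2m+1$ (the vertex $x$ itself is in $N_{H_3}(a,b)$ here). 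If $ab\notin E(H_3(x))$, then $x\notin N_{H_3}(a,b)$ and part~(2) gives $N_{H_3}(x,a)\cap N_{H_3}(x,b)\cap N_{H_3}(a,b)=\emptyset$, so for any $c\notin(N_{H_3}(x,a)\cap N_{H_3}(x,b))\cup\{x,a,b\}$ we have $abc\in E(H_3)$, whence $d_{H_3}(a,b)\ge (6m+4)-3-|N_{H_3}(x,a)\cap N_{H_3}(x,b)|\ge (6m+1)-(2m+1)=4m\ge 2m+1$. Combining, $\delta_2(H_3)=2m+1=\lfloor n/3\rfloor$, and since by Theorem~\ref{ASY K_4^-} this meets the required lower bound for $n\equiv 4\pmod 6$, Claim~\ref{CLAIM: c3} follows. (Together with Claims 1 and 2 this also completes the proof of Theorem~\ref{exact K_4^-}.)

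The main obstacle is purely computational: one must confirm from the explicit edge list of Construction C that (a) the three matching pairs have empty common neighbourhoods in $G_3$, and (b) after adding the matching $\{15,26,48\}$ the link graph $H_3(x)$ is genuinely $(2m+1)$-regular — i.e. the ad hoc-looking edge set of $G_3$ together with the special vertex $9$ (with its three edges $19,39,79$ and its blowup-edges $9v_2,9v_5$) was reverse-engineered precisely so that every vertex degree comes out equal. There is no conceptual difficulty beyond Observation~\ref{OBS: o1} and the degree bookkeeping already carried out in Claims 1 and 2; the content is in checking that this particular gadget graph on $9+6(m-1)$ link-vertices has the claimed regularity and independence-of-neighbourhoods properties. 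I would present this by displaying the degree of each orbit of vertices ($v_i$-classes, labelled vertices $1,\dots,9$) under $G_3$ plus the matching and observing each equals $2m+1$.
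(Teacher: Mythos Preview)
Your plan follows the paper's argument almost exactly, but it contains one factual error that would cause the verification you propose to fail: the link graph $H_3(x)$ is \emph{not} $(2m+1)$-regular. If you compute degrees from Construction~C, the ${\bf k}_3$-blowup of $G_3$ already gives vertex~$1$ degree $3+2(m-1)=2m+1$ (neighbours $2,8,9$ and the classes $V_1,V_3$), and then the matching edge $15$ raises it to $2m+2$. All other vertices do end up with degree $2m+1$, so $H_3(x)$ is only \emph{almost} regular, with the single exception $d_{H_3(x)}(1)=2m+2$. Your statement ``one verifies from the edge list that this makes every vertex of $H_3(x)$ have degree $2m+1$'' is therefore false, and the verification you flag as the main obstacle cannot succeed as written.

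The repair is minor and is exactly what the paper does: treat vertex~$1$ separately. One has $d_{H_3}(x,1)=2m+2$ while $d_{H_3}(x,a)=2m+1$ for $a\neq 1$; and for $ab\in E(H_3(x))$ one gets $d_{H_3}(a,b)=(6m+4)-(2m+1)-(2m+1)=2m+2$ when $1\notin\{a,b\}$ but $d_{H_3}(1,b)=(6m+4)-(2m+2)-(2m+1)=2m+1$ when $a=1$. The non-edge case and the triangle-freeness check go through as you wrote them. So the minimum codegree $2m+1$ is actually attained at pairs containing vertex~$1$ (and at pairs containing $x$), and the overall conclusion stands.
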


\noindent{\bf Proof of Claim~\ref{CLAIM: c3}:}
 By (1) of Construction~\ref{Construction C:}, one can check that $H_3(x)$ is triangle-free; and by (2) of Construction~\ref{Construction C:},  any two incident edges of $H_3(x)$ are not contained in one edge of $H_3$. By Observation~\ref{OBS: o1}, $x$ is contained in no copy of $K_4^-$ in $H_3$. So $H_3$ has no  $K_4^-$-covering. %Let us now compute the minimum codegree of $F_1$.

By the construction of $H_3(x)$, one can check that $H_3(x)$ is almost $(2m+1)$-regular, i.e. $d_{H_3(x)}(a)=2m+1$ for all vertices $a\in V(H_3)\setminus\{x,1\}$ and $d_{{H_3}(x)}(1)=2m+2$.  So  $d_{H_3}(x,a)=2m+1$ for all $a\in V(H_3)\setminus\{x,1\}$ and $d_{H_3}(x,1)=2m+2$. Now assume $\{a,b\}\subseteq V(H_3)\setminus\{x\}$.
 If  $ab\in E(H_3(x))$, by (2) of Construction~\ref{Construction C:}, $N_{H_3}(x,a)\cap N_{H_3}(a,b)=\emptyset$, $N_{H_3}(x,b)\cap N_{H_3}(a,b)=\emptyset$, and for any $c\in V(H_3)\setminus(N_{H_3}(x,a)\cup N_{H_3}(x,b))$, $\{a,b,c\}$ forms an edge of $H_3$. Since $H_3(x)$ is triangle-free, $N_{H_3}(x,a)\cap N_{H_3}(x,b)=\emptyset$. If $1\notin \{a,b\}$ then $d_{H_3}(a,b)=|V(H_3)|-|N_{H_3}(x,a)|-| N_{H_3}(x,b)|=6m+4-2(2m+1)=2m+2$. Now assume $1\in\{a,b\}$, say $a=1$. Then $d_{H_3}(1,b)=|V(H_3)|-|N_{H_3}(x,1)|-| N_{H_3}(x,b)|=6m+4-(2m+2)-(2m+1)=2m+1$.
If $ab\notin E(H_3(x))$ then $x\notin N_{H_3}(a,b)$. By (2) of the construction of $H_3$, $N_{H_3}(x,a)\cap N_{H_3}(x,b)\cap N_{H_3}(a,b)=\emptyset$; or equivalently, for any $c\notin (N_{H_3}(x,a)\cap N_{H_3}(x,b))\cup\{x,a,b\}$, $abc\in E(H_3)$.
%By (1) of the construction of $H_2$, $|N_{H_2}(x,a)\cap N_{H_1}(x,b)|\le 2m-1$.
So we have $d_{H_3}(a,b)=6m+4-3-|N_{H_3}(a,x)\cap N_{H_3}(b,x)|\ge 4m\ge 2m+1$.
%we have $d(a,b)=6m-1-2-|N(a,x)\cap N(b,x)|=4m-3$, which for $m\geq 2$ is greater than $2m$, when $m=1$, it is easy to check that $d(a,b)=2m$.
%\end{proof}
% Observe that $F_2$ is triangle-free and $F_2$ satisfies $(*)$, so $x$ is contained in no copy of $K_4^-$ in $F_2$. Let us now compute the minimum codegree of $F_2$.

% The link graph of $x$ is $(2m+1)$-regular. For $a,b\in V\backslash\{x\}$, $d(x,a)=2m+1$. If $ab\in F_2(x)$, then $d(a,b)=6m+2-2\times (2m+1)=1=2m+1$. If $ab\notin F_2(x)$, then $d(a,b)=6m+2-2-|N(a,x)\cap N(b,x)|=4m-1\geq 2m+1$.

% Up to the choice of $a,b$, this covers all possible pairs in $F_2$, thus
% $$c_2(n,K_4^-)\geq \delta_2(F_2)=2m+1,n=6m+3.$$

Case 3 follows from Claim~\ref{CLAIM: c3}.

% Observe that $F_3$ is triangle-free and $F_3$ satisfies $(*)$, so $x$ is contained in no copy of $K_4^-$ in $F_3$. Let us now compute the minimum codegree of $F_3$.

% The link graph of $x$ has all vertices degree $2m+1$ except vertex $1$ which has degree $2m+2$. For $a,b\in V\backslash\{\{x\}\cup\{1\}\}$, $d(x,a)=2m+1$, $d(x,1)=2m+2$. If $ab\in F_3(x)$, then $d(a,b)=6m+3-2\times (2m+1)+1=2m+2$. If $ab\notin E(F_3(x))$, then $d(a,b)=6m+3-2-|N(a,x)\cap N(b,x)|=4m\geq 2m+1$. If $a1\in F_3(x)$, then $d(a,1)=6m+3-(2m+1)-(2m+2)+1=2m+1$. If $a1\notin F_3(x)$, then $d(a,b)=6m+3-2-|N(a,x)\cap N(1,x)|=4m\geq 2m+1$.

% Up to the choice of $a,b$, this covers all possible pairs in $F_3$, thus
% $$c_2(n,K_4^-)\geq \delta_2(F_3)=2m+1,n=6m+4.$$

Theorem~\ref{exact K_4^-} follows from Cases 1,2,3 and Theorem~\ref{ASY K_4^-}.

%and lower bounds obtained above, we obtain Theorem~\ref{exact K_4^-}.

% \noindent{\bf 2.2 Proof of Theorem~\ref{exact K_5^-}}

\subsection{Proof of Theorem~\ref{exact K_5^-}}

The following theorem is well known in graph theory.

\begin{thm}[K\"{o}nig~\cite{Konig16}]\label{Konig}
Let $G$ be a bipartite graph with maximum
degree $\Delta$. Then $E(G)$ can be partitioned into $M_1,  M_2, \ldots, M_{\Delta}$ so
that each $M_i\  (1\le i\le \Delta)$ is a matching in $G$.
In particular, if $G$ is $\Delta$-regular then $E(G)$ can be partitioned into $\Delta$ perfect matchings.
\end{thm}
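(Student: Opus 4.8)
The plan is to prove the equivalent statement that every bipartite graph $G$ with maximum degree $\Delta$ admits a proper edge-colouring with $\Delta$ colours, since a colour class is exactly a matching and the $\Delta$ colour classes then partition $E(G)$ into $M_1,\dots,M_{\Delta}$. I would argue by induction on the number of edges, the edgeless graph being trivial. For the inductive step, fix an edge $uv$ with $u$ and $v$ in opposite parts $A$ and $B$. The graph $G-uv$ has fewer edges and maximum degree at most $\Delta$, so by the inductive hypothesis it has a proper edge-colouring with the palette $\{1,\dots,\Delta\}$. Since $d_{G-uv}(u)\le\Delta-1$ and $d_{G-uv}(v)\le\Delta-1$, some colour $\alpha$ is missing at $u$ and some colour $\beta$ is missing at $v$. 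If $\alpha=\beta$ I assign that colour to $uv$ and finish.

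The substantive case is $\alpha\neq\beta$, which I would handle by a Kempe-chain (alternating-path) recolouring. Consider the subgraph $H$ spanned by the edges coloured $\alpha$ or $\beta$; every vertex has degree at most $2$ in $H$, so $H$ is a disjoint union of paths and even cycles. Let $P$ be the maximal $\alpha/\beta$-alternating path starting at $u$; as $\alpha$ is missing at $u$, its first edge is coloured $\beta$. The crux is the claim that $v\notin V(P)$. Granting it, I swap the two colours along $P$; this leaves every vertex outside $P$ untouched, so $\beta$ remains missing at $v$, while after the swap $\beta$ also becomes missing at $u$ (its unique $H$-edge, formerly coloured $\beta$, is now $\alpha$). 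I then colour $uv$ with $\beta$, producing a proper $\Delta$-colouring of $G$.

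The main obstacle is establishing $v\notin V(P)$, and this is precisely where bipartiteness enters. Because $\alpha$ is missing at $u$ and $\beta$ at $v$, both $u$ and $v$ have degree at most one in $H$, hence each is an endpoint of its own component. If $u$ and $v$ lay in the same component $P$, that component would be an $\alpha/\beta$-alternating path from $u$ to $v$ starting with a $\beta$-edge (at $u$) and ending with an $\alpha$-edge (at $v$); such a path has an even number of edges, which forces its two endpoints into the same part of the bipartition. This contradicts $u\in A$ and $v\in B$ (they are joined by $uv$), so $v$ lies in a different component and $v\notin V(P)$.

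Finally, for the ``in particular'' clause, suppose $G$ is $\Delta$-regular; then the two parts have equal size, and in the colouring produced above each vertex is incident with exactly one edge of each colour. Hence every colour class meets every vertex, i.e.\ is a perfect matching, and the $\Delta$ classes partition $E(G)$ into $\Delta$ perfect matchings. As an alternative to the whole argument, one could instead embed $G$ as a subgraph of a $\Delta$-regular bipartite (multi)graph $G'$, repeatedly extract a perfect matching of $G'$ via Hall's condition (which holds automatically since $\Delta|S|=e(S,N(S))\le\Delta|N(S)|$ gives $|N(S)|\ge|S|$ for every $S$), peel it off to reduce $\Delta$ by one, and restrict the resulting $\Delta$ perfect matchings of $G'$ back to $E(G)$.
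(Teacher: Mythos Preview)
Your proof is correct and follows the standard Kempe-chain argument for K\"onig's edge-colouring theorem; the parity argument showing $v\notin V(P)$ is exactly the right use of bipartiteness, and the deduction of the regular case is fine. However, there is nothing to compare against: the paper does not give its own proof of this theorem. It is quoted as a classical result (``well known in graph theory''), attributed to K\"onig's 1916 paper, and used as a black box in Construction~4 to decompose $K(V_1,V_2)$ into matchings. So your write-up supplies a proof where the paper intentionally omits one; either of your two approaches (Kempe chains or embedding into a $\Delta$-regular bipartite multigraph and peeling off perfect matchings via Hall) is a standard textbook route and would be acceptable background.
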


%Given disjoint sets $V_1, V_2, \cdots, V_r$, let $K(V_1, V_2, \ldots, V_r)$ be the complete $r$-partite $r$-graph with vertex classes $V_1\cup V_2\cup\ldots\cup V_r$.

\begin{cons}\label{cons D} Given positive integers $m, \ell$ with $m\le \ell$ and two disjoint sets $V_1, V_2$ with $|V_1|\le |V_2|=m$, by Theorem~\ref{Konig}, the edge set of the complete bipartite graph $K(V_1, V_2)$ has a partition $M_1, M_2, \ldots, M_m$ such that each $M_i$ $(1\le i\le m)$ is a matching.
Let $T$ be the 3-partite 3-graph with vertex classes $V_1\cup V_2\cup [\ell]$ and edge set $$E(H)=\bigcup_{i=1}^m\{ e\cup\{i\} : e\in M_i\}.$$
\end{cons}

\begin{proof}[Proof of Theorem~\ref{exact K_5^-}]

We first give the extremal 3-graph for $K_5^-$.

% Let $V'=V_1\sqcup V_2\sqcup V_3$ with $|V_1|=m-1, |V_2|=|V_3|=m.$ Let $G_0$ be a $3$-partite $3$-graph on $V'$. Suppose $M_1,M_2,...,M_m$ are $m$ vertex-disjoint almost perfect matchings each of size $m-1$ on bipartite graph with bipartition $V_1$ and $V_2$. Set $V_3=[m]$, and
% $$E(G_0)=\bigcup_{i=1}^m\{e\cup\{i\}:e\in M_i\}.$$

% Let $V'=V_1\sqcup V_2\sqcup V_3$ with $|V_i|=m,i=1,2,3.$ Let $G_1$ be a $3$-partite $3$-graph on $V'$. Suppose $M_1,M_2,...,M_m$ are $m$ vertex-disjoint perfect matchings on bipartite graph with bipartition $V_1$ and $V_2$. Set $V_3=[m]$, and
% $$E(G_1)=\bigcup_{i=1}^m\{e\cup\{i\}:e\in M_i\}.$$

% Let $V'=V_1\sqcup V_2\sqcup V_3$ with $|V_1|=|V_2|=m, |V_3|=m+1.$ Let $G_2$ be a $3$-partite $3$-graph on $V'$. Suppose $M_1,M_2,...,M_m$ are $m$ vertex-disjoint perfect matchings on bipartite graph with bipartition $V_1$ and $V_2$. Set $V_3=[m+1]$, and
% $$E(G_2)=\bigcup_{i=1}^m\{e\cup\{i\}:e\in M_i\}.$$

\begin{cons}\label{Cons E}
Given a positive integer $m$ and three disjoint sets $V_1, V_2, V_3$ such that  $m-1\le |V_1|\le |V_2|=m\le |V_3|\le m+1$ and $|V_3|-|V_1|\le 1$, then $3m-1\le \sum\limits_{i=1}^3|V_i|\le 3m+1$.  Denote $V_3=[\ell]$. Then $\ell=m$ or $m+1$. Let $T$ be the 3-partite 3-graph on vertex set $V_1\cup V_2\cup V_3$ defined by Construction~\ref{cons D}. Let $x$ be {a specific} vertex not in $V_1\cup V_2\cup V_3$. Define the 3-graph $H_4$ on vertex set $V_1\cup V_2\cup V_3\cup\{x\}$ such that the following holds.

(1) The link graph of $x$, $H_4(x)$, consists of three complete bipartite graphs $K(V_1, V_2)$, $K(V_1, V_3)$ and $K(V_2, V_3)$.

(2) Each triple $e\notin E(K(V_1, V_2, V_3))$ is an edge of $H_4$.

(3) $E(T)\subseteq E(H_4)$.

\end{cons}
 %We construct a $3$-graph $F_4$ on $V=[n]$ as follows. Select a special vertex $x$. Split the remainder of the vertices into three parts $V_1\sqcup V_2\sqcup V_3=V\backslash \{x\}$ with size as equal as possible.
 %$$|V_3|-1\leq|V_1|\leq|V_2|\leq|V_3|.$$

% Put in as the link graph of $x$ all pairs between distinct parts, i.e., add in all triples of the form $xV_iV_j$ for $i\neq j$. Further, add in all triples not containing $x$ and meeting at most two of the three parts $(V_i)_{i=1}^3$.

Let $n=|V(H_4)|$. Then $3m\le n\le 3m+2$.

\begin{claim}\label{Claim: 1.4}
 $H_4$ has no $K_5^-$-covering and $\delta_2(H)\ge  \lfloor\frac{3n-2}{3}\rfloor$.
\end{claim}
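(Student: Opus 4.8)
The claim has two parts: (a) $H_4$ has no $K_5^-$-covering, and (b) $\delta_2(H_4) \geq \lfloor \frac{3n-2}{3} \rfloor$. I would handle them separately.

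For part (a), I would show that the specific vertex $x$ lies in no copy of $K_5^-$. Recall $K_5^-$ is $K_5$ minus one edge, so it has $5$ vertices, $9$ edges, and every vertex has degree at least $3$ (two vertices have degree $4$, three have degree $3$); in particular, deleting any single vertex from $K_5^-$ leaves either $K_4$ or $K_4^-$, both of which contain a triangle. The link graph $H_4(x)$ is the complete tripartite graph $K(V_1,V_2,V_3)$ by item (1) of Construction~\ref{Cons E}. If $x$ were in a copy of $K_5^-$ on vertex set $\{x,a,b,c,d\}$, then the four vertices $a,b,c,d$ together with all triples among them that form edges of $H_4$ containing $x$ would need to realize the link-of-$x$ pattern inside this $K_5^-$, which is $K_4$ or $K_4^-$ on $\{a,b,c,d\}$ — in either case a graph containing a triangle. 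But $H_4(x) = K(V_1,V_2,V_3)$ is tripartite, hence triangle-free, so it contains no $K_4$ or $K_4^-$. Therefore $x$ is in no copy of $K_5^-$, and $H_4$ has no $K_5^-$-covering. (This is the $K_5^-$-analogue of Observation~\ref{OBS: o1}; I would state the needed triangle-freeness consequence explicitly and invoke it.)

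For part (b), I would compute codegrees directly from items (1)–(3) of Construction~\ref{Cons E}. First the pairs containing $x$: for $a \notin \{x\}$, $d_{H_4}(x,a) = d_{H_4(x)}(a)$, and since $H_4(x) = K(V_1,V_2,V_3)$ with part sizes between $m-1$ and $m+1$ summing to $n-1$, each such degree is $(n-1)$ minus the size of $a$'s own part, which is at least $(n-1)-(m+1) = n-m-2 \geq 2m-3$; I would check this is at least $\lfloor\frac{3n-2}{3}\rfloor = n-1$... wait, that bound is $n-1$ when $3\mid n$, so I need to be more careful — in fact $\lfloor\frac{3n-2}{3}\rfloor = n-1$, so the target is simply $n-1$. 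Hmm, then $d_{H_4}(x,a) = n-1-|$part of $a|$ is generally less than $n-1$, so the claim as I'm reading it must target a smaller value; I would recompute $\lfloor\frac{3n-2}{3}\rfloor$: for $n = 3m$ it is $3m-1$... no. Let me just treat the target bound as whatever $\lfloor\frac{3n-2}{3}\rfloor$ evaluates to in each residue class $n \in \{3m, 3m+1, 3m+2\}$, namely roughly $n - \lceil 2/3 \rceil$-type expressions, and verify each codegree meets it. Concretely: for a pair $\{a,b\}$ with neither equal to $x$, item (2) says every triple not lying in $K(V_1,V_2,V_3)$ is an edge, so $abc \in E(H_4)$ whenever $c$ is in the same part as $a$ or $b$ (or $c = x$, handled by the link), while for $c$ in the third part we only get an edge if the triple is forced by (2) being vacuous there or by (3) via $T$. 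So $d_{H_4}(a,b)$ equals $1$ (for $x$, when applicable) plus (number of $c$ in parts of $a,b$ other than $a,b$ themselves) plus (number of $c$ in the third part with $abc \in E(T)$). I would split into the cases "$a,b$ in the same part" versus "$a,b$ in different parts," use $|V_i| \in \{m-1,m,m+1\}$, and in the different-parts case use that the matchings $M_1,\dots,M_m$ from Construction~\ref{cons D} partition all of $K(V_1,V_2)$, so the $T$-edges contribute a controlled amount. In each case the count comes out to at least the stated floor.

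The main obstacle I anticipate is the bookkeeping in part (b) for pairs spanning two different parts: the contribution from $E(T)$ depends on how the König matchings $M_1,\dots,M_m$ are distributed, and since $|V_1|$ may be as small as $m-1$ (so $K(V_1,V_2)$ is not regular and the matchings $M_i$ need not be perfect), I would need to argue that the total $T$-contribution across the relevant pairs is still large enough — essentially that $|E(T)| = |E(K(V_1,V_2))| = |V_1|\cdot|V_2|$ is spread so that no pair is starved. I expect this reduces to: for fixed $a \in V_1, b \in V_2$, the number of $i$ with $\{a,b\} \in M_i$ is either $0$ or $1$, but what matters is edges $abc$ with $c \in V_3=[\ell]$, i.e. $c = i$ with $ab \in M_i$ — at most one such $c$ — so actually the $T$-contribution to a single cross-part codegree $d_{H_4}(a,b)$ with $a\in V_1,b\in V_2$ is at most $1$, and for pairs with one endpoint in $V_3$ it is the length of a matching, hence up to $|V_1|$. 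So the tight case is $a\in V_1, b\in V_2$: there $d_{H_4}(a,b) = 1_{\{x\}} + (|V_1|-1) + (|V_2|-1) + (\text{0 or 1 from }T)$, and I must check $|V_1|+|V_2| - 1 \geq \lfloor\frac{3n-2}{3}\rfloor$ using $|V_1|+|V_2| = n - 1 - |V_3| \geq n - 1 - (m+1)$ together with $n \leq 3m+2$. This inequality is the crux and I would verify it separately for each of the three possible values of $n \bmod 3$; I expect it holds with the constraints $|V_3| \le m+1$ and $|V_3| - |V_1| \le 1$ doing exactly the work needed to make it tight.
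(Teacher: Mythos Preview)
Your plan for part (a) has a fatal error: the link graph $H_4(x)=K(V_1,V_2,V_3)$ is the complete tripartite graph, which is \emph{not} triangle-free. Any triple with one vertex in each $V_i$ gives a triangle, and in fact $H_4(x)$ even contains copies of the $2$-graph $K_4^-$ (take $a,b\in V_1$, $c\in V_2$, $d\in V_3$). So the Observation~\ref{OBS: o1}-style argument you sketch does not rule out $x$ lying in a $K_5^-$. The paper's proof of this part is genuinely more delicate: it first uses pigeonhole to find two of $a,b,c,d$ in the same part $V_i$, so $x$ together with those two is a non-edge; since $K_5^-$ misses only one edge, this forces $\{a,b,c,d\}$ to span a full $K_4$ in $H_4$. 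Then the key step uses Construction~\ref{cons D}: because $T$ has $\Delta_2(T)\le 1$, any pair from two different parts has at most one neighbour in the third part, which prevents a $K_4$ from meeting all three $V_i$; and if it meets only one or two parts, then two $x$-edges are missing, contradicting $K_5^-$. Your outline never invokes the structure of $T$, and without it part (a) cannot be completed.

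For part (b), first note that the ``$\lfloor\tfrac{3n-2}{3}\rfloor$'' in the statement is a typo for $\lfloor\tfrac{2n-2}{3}\rfloor$ (compare Theorem~\ref{exact K_5^-} and the computations in the paper's own proof); this is the source of your confusion about the target being $n-1$. Once you correct the target, your case split is essentially the paper's. One refinement: for $a\in V_1$, $b\in V_2$ the $T$-contribution is exactly $1$, not ``$0$ or $1$'', because $M_1,\dots,M_m$ \emph{partition} $E(K(V_1,V_2))$, so $ab$ lies in precisely one $M_i$. This gives $d_{H_4}(a,b)=1+(|V_1|-1)+(|V_2|-1)+1=|V_1|+|V_2|$, and the size constraints $|V_3|\le m+1$, $|V_3|-|V_1|\le 1$ then make the inequality go through. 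You should also treat the cross-pairs involving $V_3$ separately (there the $T$-contribution can be $0$, e.g.\ when $\ell=m+1$ and $c=m+1$), which is where the part-size balance conditions are actually needed.
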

%for some $m\in \mathbb{N}$ and $r\in\{0,1,2\}$, adding $G_r$ induced on $V_1\sqcup V_2\sqcup V_3$ with the coincident partitions.
\noindent{\bf Proof of Claim~\ref{Claim: 1.4}:}
 We show that $x$ is contained in no copy of $K_5^-$ in $H_4$. Suppose to the contrary that $H_4$ contains a copy of $K_5^-$, say $K$, covering $x$. Denote $V(K)=\{x,a,b,c,d\}$. Then there is at least one part $V_i$ $(1\le i\le 3)$ such that $|V_i\cap\{a,b,c,d\}|\ge 2$. By (1) of Construction~\ref{Cons E}, there is no edge of $H_4$ included in $\{x\}\cup V_i$. So at least one edge  connecting $x$ and $\{a,b,c,d\}$ misses from $K$. As $K$ is a copy of $K_5^-$, there is exact one edge  between $x$ and $\{a,b,c,d\}$ missed and so $\{a,b,c,d\}$ induces a copy of the complete 3-graph $K_4$ in $H_4$.  From Construction~\ref{cons D},  {$\Delta_2(T)\le 1$}.
 %, the codegree of distinct vertices from different parts is at most one.
 By (2) and (3) of Construction~\ref{Cons E}, each pair of vertices chosen from different parts of $V_1, V_2, V_3$ has at most one neighbor in the remaining part.
 Thus a putative $K_4$ induced by $\{a,b,c,d\}$ intersects at most two parts of $V_1, V_2, V_3$. But this is impossible. In fact, if there is some $1\le i\le 3$ such that $a,b,c\in V_i$, then $xab, xac\notin E(H_4)$, a contradiction. So assume $a,b\in V_i$ and $c,d \in V_j\ (i\neq j)$ for some $i,j\in\{1,2,3\}$. Then $xab,xcd\notin E(H_4)$, a contradiction too.

 %Anyway, the $3$-graph induced by $\{x,a,b,c,d\}$ in $F_4$ missing at least two edges, which can not be a copy of $K_5^-$.

% Now suppose there is a copy of $K_4$ containing $x$ induced by $\{x,a,b,c\}$. The only triangle in the link graph of $x$ are tripartite, and thus without loss of generality set $a\in V_1, b\in V_2, c\in V_3$, and $abc\in E(F_4)$. It follows that, say, $d\in V_3$, then $xcd,abd\notin E(F_4))$, similarly, $\{x,a,b,c,d\}$ can not induce a copy of $K_5^-$.

Now we compute the minimum codegree of $H_4$. Choose two distinct vertices $a,b\in V(H_4)$. If $x\in\{a,b\}$, assume $x=a$ and $b\in V_i$, then by (1) of Construction~\ref{Cons E},
 $$d(x,b)=n-1-|V_i|\geq n-1-\left\lceil\frac{n-1}{3}\right\rceil=\left\lfloor\frac{2n-2}{3}\right\rfloor.$$
If $a,b\in V_i$ for some $1\le i\le 3$ then, by (2) of Construction~\ref{Cons E}, $d(a,b)=\sum\limits_{i=1}^3|V_i|-2=n-3\geq\left\lfloor\frac{2n-2}{3}\right\rfloor$.
If $a\in V_i, b\in V_j$ $(i\not=j)$, then $$d(a,b)=|V_i|+|V_j|-2+1+d_{T}(a,b)\geq\left\lfloor\frac{2n-2}{3}\right\rfloor,$$
where the inequality holds since {$d_T(a,b)=1$ when $\{i, j\}=\{1,2\}$} or {$\{i,j\}\subseteq \{1,2,3\}$ and $|V_1|=|V_2|=|V_3|=m$}.

This completes the proof of Claim~\ref{Claim: 1.4}.

By Claim~\ref{Claim: 1.4}, we have
 $$c_2(n, K_5^-)\geq \delta_2(H_4)=\left\lfloor\frac{2n-2}{3}\right\rfloor.$$
By Theorem~\ref{ASY K_5^-}, we have Theorem~\ref{exact K_5^-}.
%Matching the upper bound in Theorem~\ref{ASY K_5^-} and lower bound obtained above, we have Theorem~\ref{exact K_5^-}.

 \end{proof}


\begin{thebibliography}{99}

%\bibitem{K-SC11}
%P. Keevash, Hypergraph Tur\'{a}n problems, Surveys in combinatorics, 2011, pp. 83-139.

%\bibitem{RR-Sur10}
%V. R\"{o}dl, A. Ruci\'{n}ski, Dirac-type questions for hypergraphs--a survey (or more problems for Endre to solve), in An Irregular Mind, Springer, Berlin, 2010, pp. 561-590.

%\bibitem{Z-Sur16}
%Y. Zhao, Recent advances on Dirac-type problems for hypergraphs, Recent Trends in Combinatorics, Beveridge, Griggs, Hogben, Musiker, and Tetali, eds., Springer, Cham, 2016, pp. 145-165.

%\bibitem{CN-BICA01}
%A. Czygrinow, B. Nagle, A note on codegree problems for hypergraphs, Bull. Inst. Combin. Appl, 32 (2001), pp. 63-69.

%\bibitem{FPVV-ENDM17}
%V. Falgas-Ravry, O. Pikhurko, E.R. Vaughan, and J. Volec, The codegree threshold of $K_4^-$. Electron. Notes Discrete Math, 61:407-413, 2017.


%\bibitem{N-CN99}
%B. Nagle, Tur\'{a}n-related problems for hypergraphs, Congr. Numer., 136 (1999), pp. 119-128.

%\bibitem{LM-JCTA13}
%A. Lo, K. Markstr\"{o}m, Minimum codegree threshold for $K_4^-$-factors, J. Combin. Theory Ser. A, 120 (2013), pp. 708-721.

%\bibitem{LM-GC15}
%A. Lo, K. Markstr\"{o}m, $F$-factors in hypergraphs via absorption, Graphs Combin., 31(2015), pp. 679-712.

%\bibitem{KM-MAMS15}
%P. Keevash, R. Mycroft, A geometric theory for hypergraph matching, Mem. Amer. Math. Soc., 233 (2015).

%\bibitem{HLTZ-CPC17}
%J. Han, A. Lo, A. Treglown, Y. Zhao, Exact minimum codegree threshold for $K_4^-$-factors, Combin. Probab. Comput., 26 (2017), pp. 856-885.

%\bibitem{GH-CPC17}
%W. Gao and J. Han, Minimum Codegree Threshold for $C_6^3$-Factors in $3$-Uniform Hypergraphs, Combin. Probab. Comput., 26 (2017), pp. 536-559.

%\bibitem{C-JGT16}
%A. Czygrinow, Tight co-degree condition for packing of loose cycles in $3$-graphs, Journal of graph theory, 83(4):317-333, 2016.
\bibitem{FZ-SIAM16}
V. Falgas-Ravry, Y. Zhao, Codegree thresholds for covering $3$-uniform hypergraphs, SIAM J. Discrete Math., 30 (4) (2016), 1899-1917.

\bibitem{HLS-ENDM17}
J. Han, A. Lo, N. Sanhueza-Matamala, Covering and tiling hypergraphs with tight cycles, Electron. Notes Discrete Math., 61 (2017), 561-567.

\bibitem{HZZ-JCTA17}
J. Han, C. Zang, Y. Zhao, Minimum vertex degree thresholds for tiling complete $3$-partite $3$-graphs. J. Combin. Theory Ser. A, 149 (2017), 115-147.

\bibitem{Konig16}
D. K\"{o}nig, \"{U}ber Graphen und ihre Anwendung auf Determinantentheorie und
Mengenlehre, Math. Ann., 77 (1916), 453-465.


\bibitem{Z-PhD16}
C. Zang, Matchings and tilings in hypergraphs, PhD thesis, Georgia State University, 2016.


%\bibitem{MZ-JCTA07}
%D. Mubayi and Y. Zhao, Co-degree density of hypergraphs, J. Combin. Theory Ser. A, 114(2007), pp. 1118-1132.

\end{thebibliography}
\end{document}